\numberwithin{equation}{section}
\theoremstyle{plain} 
\newtheorem{theorem}{Theorem}[section]
\newtheorem{lemma}[theorem]{Lemma}
\newtheorem{proposition}[theorem]{Proposition}
\newtheorem{remark}[theorem]{Remark}
\renewcommand{\Re}{\mathrm{Re}\,}
\renewcommand{\Im}{\mathrm{Im}\,}
\newcommand{\E}{{\mathbf E }}
\newcommand{\R}{{\mathbb R }}
\newcommand{\N}{{\mathbb N}}
\renewcommand{\P}{{\mathbf P}}
\newcommand{\C}{{\mathbb C}}
\newcommand{\LL}{{\mathcal L}}
\newcommand{\ov}{\overline}
\DeclareMathOperator{\Tr}{Tr}
\DeclareMathOperator{\erfc}{erfc}
\DeclarePairedDelimiter{\abs}{\lvert}{\rvert}
\newcommand{\ii}{\mathrm{i}}
\newcommand{\dd}{\mathrm{d}}
\newcommand{\ie}{\emph{i.e., }}
\newcommand{\cf}{\emph{c.f., }}
\newcommand{\wh}{\widehat}
\newcommand{\nc}{\normalcolor}
\newcommand{\bs}{\boldsymbol}
\def\Tr{\mathrm{Tr}}
\def\one{\mathds{1}}
\def\<{\langle}
\def\>{\rangle}
\renewcommand{\mathbf}[1]{\bs{#1}}
\newcommand{\si}{\sigma}
\newcommand{\8}{\infty}
\begin{document}

	\begin{minipage}{0.85\textwidth}
		\vspace{2.5cm}
	\end{minipage}
	\begin{center}
		\large\bf Large deviations for the extremal eigenvalues of Ginibre ensembles		
	\end{center}

	\renewcommand*{\thefootnote}{\fnsymbol{footnote}}	
	\vspace{0.5cm}
	
	\begin{center}
		\begin{minipage}{1.0\textwidth}
			\begin{minipage}{0.5\textwidth}
				\begin{center}
					Yuanyuan Xu\\
					\footnotesize 
					{SKLMS,~AMSS,~Chinese Academy of Sciences}\\
					{\it yyxu2023@amss.ac.cn}
				\end{center}
			\end{minipage}
			\begin{minipage}{0.5\textwidth}
				\begin{center}
					Qiang Zeng\\
					\footnotesize 
					{SKLMS,~AMSS,~Chinese Academy of Sciences}\\
					{\it qzeng@amss.ac.cn}
				\end{center}
			\end{minipage}
		\end{minipage}
	\end{center}
	
	\bigskip

	\renewcommand*{\thefootnote}{\arabic{footnote}}
	
	\vspace{5mm}

	\begin{center}
		\begin{minipage}{0.91\textwidth}\small{
				{\bf Abstract.}}
			We establish large deviation principles for the extremal eigenvalues of the Ginibre ensembles with good rate functions. In contrast to the typical estimates for the extremal eigenvalues, the large deviations for the real Ginibre ensemble come from the eigenvalues lying on the real line. Moreover, we also derive deviation estimates for the second leading term in the asymptotic expansion of the extremal eigenvalues. These polynomially small deviation estimates are universal for any i.i.d.~matrices under a mild moment condition.

		\end{minipage}
	\end{center}

	\vspace{5mm}
	
	{\footnotesize
		{\noindent\textit{Keywords}: Large deviation, extremal statistics, Ginibre ensemble}\\
		{\noindent\textit{MSC number}:  60B20, 60G55, 60G70}\\
		{\noindent\textit{Date}:  \today \\
		}
		
		\vspace{2mm}

		\thispagestyle{headings}

		\bigskip

		\normalsize

		\section{Introduction and main results}

		We consider the Ginibre matrix ensemble, \ie $n\times n$ matrices $X=\big(x_{ij}\big)_{i,j=1}^n$ with independent, identically distributed (i.i.d.) Gaussian entries,  
		where $\sqrt{n}x_{ij}$ is a standard real or complex Gaussian random variable. We use the parameter $\beta=1,2$ to distinguish the real Ginibre ensemble~($\beta=1$) and the complex Ginibre ensemble~($\beta=2$).

		 Let $\{\sigma_i\}_{i=1}^n$  denote the eigenvalues of the Ginibre matrix $X$. The corresponding empirical spectral distribution (ESD), $\mu_n:=\frac{1}{n}\sum_{i=1}^n \delta_{\sigma_i}$, converges to the uniform distribution on the unit disk in the complex plane~\cite{Ginibre,Edel}, which is also known as the celebrated \emph{circular law}. Furthermore, the large deviation principle (LDP) from the circular law was established in~\cite{AZ98} for the real Ginibre ensemble and \cite{HP1997} for the complex Ginibre ensemble, with speed $n^2$ and a good rate function as follows. 
		\begin{theorem}[\cite{AZ98,HP1997}]\label{thm:ldp_esd}
			The empirical spectral measures $\mu_n$ of the Ginibre ensemble ($\beta=1,2$) satisfy the LDP with speed $n^2$ and good rate function given by 
			\begin{align}
				I(\mu)=&\frac{\beta}{2} \Big( \int_\C |z|^2 \mu(\dd^2 z) -\int_\C \int_\C \log |z-w|  \mu(\dd^2 z)\mu(\dd^2 w) \Big)-\frac{3\beta}{8}.
			\end{align}
            Moreover, the uniform distribution $\mu_D$ on the unit disk is the unique minimizer of $I(\cdot)$ so that $I(\mu_D)=0$.
		\end{theorem}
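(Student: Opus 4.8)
The plan is to run the classical Coulomb-gas large-deviation analysis. \emph{Joint density and reduction to an energy functional.} For $\beta=2$ the eigenvalues $\{\sigma_i\}$ have the exact joint density $\frac1{Z_n}\prod_{i<j}|\sigma_i-\sigma_j|^2\prod_i e^{-n|\sigma_i|^2}$ on $\C^n$, while for $\beta=1$ the real Ginibre spectrum is a Pfaffian process supported on configurations with $k$ real eigenvalues and $(n-k)/2$ conjugate pairs, with density on each sector proportional to $\prod_{i<j}|\sigma_i-\sigma_j|$ times explicit Gaussian and $\erfc$ weights. In both cases, expressing the density through $\mu_n=\frac1n\sum_i\delta_{\sigma_i}$ yields $\frac1{Z_n}\exp\!\big(-\tfrac\beta2 n^2\mathcal E_{\neq}(\mu_n)+o(n^2)\big)$, where $\mathcal E_{\neq}(\mu)\deq\int_\C|z|^2\mu(\dd^2z)+\iint_{z\neq w}\log\tfrac1{|z-w|}\,\mu(\dd^2z)\mu(\dd^2w)$, and for $\beta=1$ the $o(n^2)$ term absorbs the sum over the sectors together with the contribution of the $O(\sqrt n)$ real eigenvalues, which a separate elementary estimate shows to contribute only at scale $\lesssim n^{3/2}$. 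The diagonal singularity of the logarithmic interaction is removed by capping $\log\tfrac1{|z-w|}$ from above at a level $M$, at a multiplicative cost $e^{O(nM)}=e^{o(n^2)}$; the capped functional $\mathcal E^{(M)}$ is bounded and weakly continuous and increases to $\mathcal E(\mu)\deq\int_\C|z|^2\mu(\dd^2z)+\iint\log\tfrac1{|z-w|}\,\mu(\dd^2z)\mu(\dd^2w)$ as $M\to\infty$.

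\emph{Exponential tightness and the LDP bounds.} From $\log|z-w|\le\log(1+|z|)+\log(1+|w|)+C$ one gets $\mathcal E_{\neq}(\mu)\ge\tfrac12\int|z|^2\mu-C'$, which combined with $\liminf_n n^{-2}\log Z_n>-\infty$ (itself immediate from a lower-bound configuration) gives exponential tightness at speed $n^2$ and lets us restrict to measures supported in a fixed large disk. For a closed set $F$, covering the (now compact) set of such measures by small weak neighborhoods, invoking the continuity of $\mathcal E^{(M)}$ and sending $M\to\infty$, yields $\limsup_n n^{-2}\log\P(\mu_n\in F)\le-\tfrac\beta2(\inf_F\mathcal E-\inf\mathcal E)$ and, in particular, $n^{-2}\log Z_n\to-\tfrac\beta2\inf\mathcal E$. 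For an open set $G$ and $\mu\in G$ with $\mathcal E(\mu)<\infty$, approximate $\mu$ by a compactly supported measure with bounded density, partition its support into $n$ cells of equal $\mu$-mass and place one eigenvalue per cell; on the resulting weak neighborhood $\prod_{i<j}|\sigma_i-\sigma_j|^\beta\ge\exp\!\big(\tfrac\beta2 n^2(\iint\log|z-w|\,\mu\,\mu-\varepsilon)\big)$, and together with the companion lower bound on $Z_n$ this gives $\liminf_n n^{-2}\log\P(\mu_n\in G)\ge-\tfrac\beta2(\mathcal E(\mu)-\inf\mathcal E)$; optimizing over $\mu\in G$ produces the matching lower bound. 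Hence $\mu_n$ satisfies the LDP at speed $n^2$ with good rate function $I(\mu)=\tfrac\beta2(\mathcal E(\mu)-\inf\mathcal E)$.

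\emph{The minimizer and the constant.} The logarithmic energy $\iint\log\tfrac1{|z-w|}\,\nu\,\nu$ is strictly positive on nonzero compactly supported signed measures of zero total mass, so $\mathcal E$ is strictly convex on its convex finiteness domain; lower semicontinuity and the confinement by $|z|^2$ give a unique minimizer $\mu_\star$, characterized by the Euler--Lagrange (Frostman) conditions that $|z|^2+2\int\log\tfrac1{|z-w|}\,\mu_\star(\dd^2w)$ be constant, $=C$, on $\supp\mu_\star$ and $\ge C$ quasi-everywhere. Since $\int\log\tfrac1{|z-w|}\,\mu_D(\dd^2w)$ equals $\tfrac12(1-|z|^2)$ for $|z|\le1$ and $-\log|z|$ for $|z|\ge1$, this effective potential equals $1$ on the unit disk and equals $|z|^2-\log|z|^2\ge1$ outside it, so $\mu_\star=\mu_D$. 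Finally $\int|z|^2\mu_D=\tfrac12$ and $\iint\log\tfrac1{|z-w|}\,\mu_D\,\mu_D=\tfrac14$, so $\inf\mathcal E=\mathcal E(\mu_D)=\tfrac34$ and $I(\mu)=\tfrac\beta2\big(\int_\C|z|^2\mu(\dd^2z)-\iint\log|z-w|\,\mu(\dd^2z)\mu(\dd^2w)\big)-\tfrac{3\beta}8$, with $I(\mu_D)=0$. The one genuinely delicate step is the $\beta=1$ reduction above: showing that the singular real-line component of the Pfaffian process and the combinatorial sum over the number of real eigenvalues are invisible at speed $n^2$. Everything else is the by-now-standard potential-theoretic machinery.
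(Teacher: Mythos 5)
The paper does not supply a proof of this theorem; it is quoted as a known result, attributed to Ben Arous--Zeitouni for $\beta=1$ and Hiai--Petz for $\beta=2$. Your argument is the standard Coulomb-gas/weighted potential theory route, which is precisely how those references establish the LDP, so in effect you are reproducing the cited proof rather than offering an alternative. The computations are correct: $\int_\C |z|^2\,\mu_D(\dd^2 z)=\tfrac12$, $\iint\log\tfrac1{|z-w|}\,\mu_D\,\mu_D=\tfrac14$, hence $\inf\mathcal{E}=\tfrac34$ and the constant $-\tfrac{3\beta}{8}$ drops out as required; the Frostman verification (effective potential $\equiv 1$ on the disk, $=|z|^2-\log|z|^2\ge 1$ outside) is also right. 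The one place your sketch is thinner than what a complete argument needs is the $\beta=1$ reduction: you assert, but do not establish, that the sum over sectors (number of real eigenvalues) and the extra $\erfc$/Gaussian weights on the real and paired-complex coordinates contribute only $o(n^2)$ and that the effective pairwise exponent is $\beta=1$. You flag this yourself as ``the one genuinely delicate step,'' and that is fair; this is exactly where the real-Ginibre case diverges from the complex one and where Ben Arous--Zeitouni invest most of their work. As written, your proposal is a correct outline with that step left as an unproved (though true) claim.
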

	    
    Although  the circular law indeed holds for a wide class of   matrices with i.i.d.~entries
     ~\cite{Girko1984, Bai97, TV10}, the LDP from the circular law is still widely open beyond the Ginibre ensembles.  
      On the other hand, the LDPs for the ESDs and extremal eigenvalues of Hermitian random matrices have been extensively studied since the seminal works \cite{BG97, BDG01}; 
      see the survey paper \cite{Gui23} for more details.

\smallskip		
		
	However, deviation estimates for the extremal eigenvalues of non-Hermitian matrices seem rare (with the exception of a few results discussed below).	In this paper, we focus on the study of the large deviations of
 the rightmost eigenvalue $\max_{i=1}^n \{\Re\sigma_i\}$ and the spectral radius $\max_{i=1}^n \{|\sigma_i|\}$.  Besides its mathematical interest,  the rightmost eigenvalue of a large non-Hermitian matrix is closely related to the stability properties of complex biological systems~\cite{May72}. Motivated by this, a series of works \cite{Gem86,BY86,BCCT18,BCG22} has proved that both the rightmost eigenvalue and the spectral radius converge to one as the dimension goes to infinity, at a nearly optimal speed $O(n^{-1/2+\epsilon})$~\cite{AEK19}. For the Ginibre ensembles, both of them
		have the following three-term asymptotic expansions~\cite{R03,RS14,Bender,AkemannP,maxRe_Gin}
		\begin{align}
           \max_{i=1}^n \Re\sigma_i \stackrel{\mathrm{d}}{=}& 1+\sqrt{\frac{\gamma'_n }{4n}}+\frac{\mathcal{G}_n}{\sqrt{4n \gamma'_n}}, 
			\qquad \gamma'_n:=\frac{\log n-5\log\log n-\log (2\pi^4)}{2},\label{gamma}\\
			\max_{i=1}^n \{|\sigma_i|\} \stackrel{\mathrm{d}}{=}& 1+\sqrt{\frac{\gamma_n }{4n}}+\frac{\mathcal{G}_n}{\sqrt{4n \gamma_n}}, 
			\qquad \gamma_n:= \log n-2\log \log n-\log 2 \pi,\label{informal}	  
		\end{align}
		where $\mathcal{G}_n$ is an asymptotic Gumbel random variable as $n\to\infty$, \ie $\P (\mathcal{G}_n\le t) \to \exp(-\frac{\beta}{2} e^{-t})$. In particular, the limiting distribution has an exponential right-tail:
 \begin{align}\label{tail_complex}
			\lim_{n\rightarrow \infty}\P\Big( \max_{i=1}^n \Re\sigma_i \ge 1+\sqrt{\frac{\gamma'_n }{4n}}+\frac{t}{\sqrt{4n \gamma'_n}}\Big) =  \frac{\beta}{2} e^{-t} +o(1), \qquad t\rightarrow \infty.
		\end{align}
	We remark that \cite{maxRe_Gin} also proves an effective estimate on the right-tail asymptotics for any $1\ll t \ll \sqrt{\log n}$.
It is worth mentioning that the Gumbel fluctuations in (\ref{gamma})--(\ref{informal}) have been proved very recently in \cite{gumbel} for general matrices with i.i.d.~entries under a mild moment condition. Following this, the optimal speed of convergence was obtained in \cite{Ma}. We also refer to the survey papers \cite{BF22a,BF22b} on the recent progress on the Ginibre ensembles and the references therein. 
		
	\smallskip	
		
	 It was well known that the joint probability density of the eigenvalues $\{\sigma_i\}_{i=1}^n$ of the complex Ginibre ensemble $X$ is given by~\cite{Ginibre}
\begin{equation}\label{complex_pdf}
    p_n(z_1,\ldots,z_n): = \frac{n^n}{\pi^n 1!\cdots n!} \exp\Bigl(-n\sum_{i}\abs{z_i}^2\Bigr) \prod_{i<j}\bigl(n\abs{z_i-z_j}^2\bigr).
\end{equation}
Indeed, the eigenvalues form a determinantal point process, whose density function is given by
		\begin{align}\label{det_pp}
			p_n(z_1,\ldots,z_n)
			=&\frac{n^n}{\pi^n n!} e^{-n\sum_{i}\abs{z_i}^2} \det\Bigl( \mathcal{K}_n(z_i, z_j)\Bigr)_{i,j=1}^n,  \qquad \mathcal{K}_n(z,w):=\sum_{l=0}^{n-1}\frac{(n z\ov{w})^l}{l!}.
		\end{align}
In particular, Kostlan observed \cite{Kostlan92} that the collection of moduli of the eigenvalues of $\sqrt{2n}X$
		has the same distribution as the collection of independent chi-distributed random variables with even degrees, \ie
		\begin{align}\label{kostlan}
			\{\sqrt{2n}|\sigma_1|,\sqrt{2n}|\sigma_2|,\cdots, \sqrt{2n}|\sigma_n|\} \stackrel{\mathrm{d}}{=} \{\chi_{2},\chi_{4}, \cdots, \chi_{2n}\}.
		\end{align}
	Heuristically, the large deviations of the spectral radius of $\sqrt{2n}X$ are mainly from that of the chi-distributed random variable with the largest degree, \ie
	\begin{align}
	 \P\Big( \max_{i=1}^n |\sigma_i| \ge t \Big) \approx \P\Big( \chi_{2n} \ge \sqrt{2n} t \Big) \approx e^{-n\big(t^2-2 \log t-1\big)}, \qquad t \geq 1.
    \end{align}
	A formal large deviation estimate for the density of the spectral radius of the complex Ginibre ensemble has already been obtained \cite{complex_LDP0,complex_LDP} in the Coulomb gas setting with general potentials, from which an LDP for the spectral radius follows immediately.

		\smallskip

		However, the real Ginibre ensemble is largely different, whose conditional joint densities of eigenvalues on the number of real eigenvalues were found in \cite{Edel, LS91}. In contrast to (\ref{complex_pdf})--(\ref{det_pp}), the eigenvalues instead form a (conditional) Pfaffian point process with an explicit correlation kernel given by \cite{BS09,tail_0}. In particular, Kostlan's observation in (\ref{kostlan}) unfortunately fails. For the real Ginibre ensemble, there typically exist $O(\sqrt{n})$ real eigenvalues~\cite{real_ev}, and the asymptotic statistics of the real eigenvalues are very different from those of the complex eigenvalues~\cite{BS09}, which behave similarly to the eigenvalues of the complex Ginibre ensemble.
		 The limiting distribution of the largest real eigenvalue was derived \cite{RS14} in term of a Fredholm determinant, with a closed form expression given by \cite{BB20}. In particular, it has Gaussian right-tail asymptotics~\cite{real_tail,tail_0}, \ie 
		\begin{align}\label{tail_real}
			\lim_{n\rightarrow \infty}\P\Big( \max_{\sigma_i \in \R} \sigma_i \ge 1+\frac{t}{\sqrt{n}}\Big)= \frac{e^{-t^2}}{4\sqrt{\pi} t}+o(1), \quad t\rightarrow \infty,
		\end{align}
		which is quite different from the exponential right-tail of the complex eigenvalues shown in (\ref{tail_complex}). After we submitted the first version of this paper to the journal, we found that the LDP for the rightmost eigenvalue was already derived in \cite{arXiv:1709.04021} even for the more general elliptic ensemble. Large deviations for the number of real eigenvalues of the real elliptic Ginibre ensemble were also studied in \cite{ABL25,BJLS25}.

	\smallskip

    The first main result of this paper is the following LDPs for the spectral radius and rightmost eigenvalue of both the real and the complex Ginibre ensembles.  The formula (\ref{rate_func}) below agrees with  \cite[Eq. (34)]{complex_LDP0} for the spectral radius in the complex case and \cite[Theorem 4.1]{arXiv:1709.04021} for the rightmost eigenvalue in the real case. The other results seem to be new.

		\begin{theorem}[LDPs for extremal eigenvalues]\label{thm:main}
			For the real~$(\beta=1)$ and complex~$(\beta=2)$ Ginibre ensemble, both the spectral radius $\max_{i=1}^n\{|\sigma_i|\}$ and the rightmost eigenvalue $\max_{i=1}^n \Re \sigma_i$ satisfy the LDP with speed $n$ and good rate function 
			\begin{align}\label{rate_func}
				I_{\beta}(t)=
				\begin{cases}
				\frac{\beta}{2} \big( t^2-2 \log t-1\big), \quad \qquad &t\ge 1,\\
				+\infty, \quad\qquad &t<1.
				\end{cases}
			\end{align} 
		Moreover, for the real Ginibre ensemble ($\beta=1$), the extremal complex-valued eigenvalue $\max_{\sigma_i\in \C\setminus \R}|\sigma_i|$ satisfies the LDP with speed $n$ and rate function $2I_1(t)$, while the largest real eigenvalue $\max_{\sigma_i\in \R}|\sigma_i|$ satisfies 
$$\lim_{n\to\8} \frac1n\log \P\Big(\max_{\sigma_i \in \R} \sigma_i \ge t\Big) = -I_1(t), \qquad t\geq 1.$$
		\end{theorem}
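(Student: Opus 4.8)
The plan is to reduce the various LDPs to one‑sided upper‑tail estimates, and then to extract those estimates from Kostlan's identity in the complex case and from the explicit correlation functions in the real case. To begin with, for a sequence of real random variables $X_n$ and a candidate rate function $I$ that is continuous and strictly increasing on $[1,\8)$, equals $+\8$ on $(-\8,1)$ and vanishes only at $1$ (the form of all the rate functions in the statement, which is automatically a good rate function), it is routine to check that the LDP at speed $n$ is equivalent to the conjunction of: (i) $X_n\to1$ in probability; (ii) $\P(X_n\le t)\le e^{-c(t)n^2}$ for some $c(t)>0$ whenever $t<1$; and (iii) $\lim_n\frac1n\log\P(X_n\ge t)=-I(t)$ for every $t>1$. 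For all the extremal statistics considered, (i) is known, and (ii) follows from Theorem~\ref{thm:ldp_esd}: for $t<1$ the event $\{X_n\le t\}$ forces $\mu_n$ into a weakly closed set of probability measures — e.g.\ $\{\mu:\mu(\{|z|>t\})=0\}$ for the spectral radius, $\{\mu:\mu(\{\Re z>t\})=0\}$ for the rightmost eigenvalue, $\{\mu:\mu(\{|z|>t,\Im z\ne0\})=0\}$ for $\max_{\sigma_i\in\C\setminus\R}|\sigma_i|$ — not containing $\mu_D$, and a good rate function with unique zero $\mu_D$ is bounded below by a positive constant on any such set, so this event has probability $e^{-\Omega(n^2)}$. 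Thus only (iii) remains. For $\beta=2$ the rightmost‑eigenvalue upper bound is handled by $\max_i\Re\sigma_i\le\max_i|\sigma_i|$; for $\beta=1$, the reflection symmetry $X\stackrel{\mathrm{d}}{=}-X$ gives $\P(\max_{\sigma_i\in\R}|\sigma_i|\ge t)\le 2\,\P(\max_{\sigma_i\in\R}\sigma_i\ge t)$ and $\max_{\sigma_i\in\C\setminus\R}\Re\sigma_i\le\max_{\sigma_i\in\C\setminus\R}|\sigma_i|$, so that, using $2I_1>I_1$ on $(1,\8)$, the $\beta=1$ spectral‑radius and rightmost‑eigenvalue statements reduce to the two estimates $\lim_n\frac1n\log\P(\max_{\sigma_i\in\R}\sigma_i\ge t)=-I_1(t)$ and $\lim_n\frac1n\log\P(\max_{\sigma_i\in\C\setminus\R}|\sigma_i|\ge t)=-2I_1(t)$.

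For the complex Ginibre spectral radius, (iii) follows directly from Kostlan's identity~(\ref{kostlan}), $\sqrt{2n}\,\max_i|\sigma_i|\stackrel{\mathrm{d}}{=}\max_{1\le k\le n}\chi_{2k}$. A Chernoff bound for $\chi_{2k}^2$, a sum of $2k$ i.i.d.\ squared standard Gaussians, gives $\P(\chi_{2k}\ge\sqrt{2n}\,t)\le\exp\!\big(-(nt^2-k-k\log(nt^2/k))\big)$; the exponent decreases in $k$ on $[1,n]$, so a union bound yields $\P(\max_i|\sigma_i|\ge t)\le n\exp(-n(t^2-2\log t-1))$, which is the upper bound with $I_2(t)=t^2-2\log t-1$. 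For the matching lower bound, $\P(\max_i|\sigma_i|\ge t)\ge\P\big(\tfrac1{2n}\chi_{2n}^2\ge t^2\big)$, and since $t^2>1$, Cramér's theorem for the empirical mean of $(Z_j^2)$ gives $\frac1n\log\P\big(\tfrac1{2n}\chi_{2n}^2\ge t^2\big)\to-(t^2-2\log t-1)$.

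The remaining instances of (iii) — the rightmost eigenvalue for $\beta=2$, and $\max_{\sigma_i\in\R}\sigma_i$, $\max_{\sigma_i\in\C\setminus\R}|\sigma_i|$ for $\beta=1$ — I would handle by a first‑moment/second‑moment argument on the count $N$ of eigenvalues in the relevant region (the half‑plane $\{\Re z>t\}$, the ray $[t,\8)\subset\R$, or $\{|z|\ge t,\Im z\ne0\}$). The upper bound comes from $\P(N\ge1)\le\E N$ together with the one‑point intensities: $\rho_n(z)=\frac n\pi e^{-n|z|^2}\mathcal{K}_n(z,z)$ from~(\ref{det_pp}) in the complex case, and the explicit one‑point densities $\rho_n^{(r)}$, $\rho_n^{(c)}$ of the real, resp.\ complex, eigenvalues of the real Ginibre ensemble, built from incomplete Gamma functions and (for $\rho_n^{(c)}$) an $\erfc$ factor. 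A Stirling / steepest‑descent analysis of these formulas gives, up to subexponential prefactors and for $|z|,|x|>1$, $\rho_n(z)\asymp e^{-nI_2(|z|)}$, $\rho_n^{(c)}(z)\asymp e^{-nI_2(|z|)}$ for $\Im z\ne0$, and $\rho_n^{(r)}(x)\asymp e^{-nI_1(x)}$ — the smaller rate for the real eigenvalues being explained by the Gaussian weight $e^{-nx^2/2}$, rather than $e^{-n|z|^2}$, carried by a real eigenvalue of the real Ginibre ensemble. Integrating, $\E N$ concentrates near the boundary of the region and picks up only a polynomial factor, so $\frac1n\log\E N\to-I_\beta(t)$ with the correct rate in each case. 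For the lower bound, the determinantal (resp.\ Pfaffian) structure gives the negative‑association‑type two‑point bound $\rho_2(z,w)\le\rho_1(z)\rho_1(w)$ (exactly, for the complex Ginibre kernel) and $\rho_2\le C\,\rho_1\otimes\rho_1$ in the tail region $|z|,|w|>1$ (for the real Ginibre ensemble); hence $\E[N(N-1)]\le C(\E N)^2$ and $\P(N\ge1)\ge(\E N)^2/\E[N^2]\ge\E N/(1+C\,\E N)\sim\E N$ since $\E N\to0$. Thus $\liminf_n\frac1n\log\P(N\ge1)\ge-I_\beta(t)$, matching the upper bound, and (iii) follows. (For $\max_{\sigma_i\in\C\setminus\R}|\sigma_i|$ the complex eigenvalues come in conjugate pairs, so $N$ is even and $\{N\ge1\}=\{N\ge2\}$, which does not affect the estimate.)

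The main obstacle is the sharp asymptotic analysis of the real Ginibre correlation functions in the large‑deviation window $|z|>1$: extracting the precise exponential rate from the Pfaffian kernel, a mixture of incomplete Gamma functions, $\erfc$, and finite exponential sums; controlling the transition regimes — the complex‑eigenvalue density as $\Im z\to0$, where the $\erfc$ and Gaussian factors compete, and the neighbourhood of $|z|=1$; and establishing the two‑point bounds with admissible constants there. By contrast the complex Ginibre ensemble is soft, since Kostlan's identity settles the spectral radius outright and the determinantal kernel in~(\ref{det_pp}) is explicit.
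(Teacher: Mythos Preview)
Your overall strategy --- reducing the LDP to one-sided tail estimates (iii), handling $t<1$ via the empirical-measure LDP, and bounding $\P(N\ge1)\le\E N$ through the one-point correlation functions --- matches the paper's. Your Kostlan/Chernoff/Cram\'er argument for the complex spectral radius is a nice alternative; the paper instead treats all cases uniformly through $K_n(z,z)$ and the incomplete-Gamma asymptotics of Lemma~\ref{lemma:poly}, but your route is equally clean there.

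The real difference is the \emph{lower} bound. You propose a second-moment argument, which requires the two-point estimate $\rho_2\le C\,\rho_1\otimes\rho_1$ in the tail. This is free for the determinantal $\beta=2$ process, but for the Pfaffian $\beta=1$ kernels it is not automatic, and you rightly flag it as the ``main obstacle''. The paper avoids it altogether with the trivial observation that $N\le n$ almost surely, whence
\[
\P(N\ge1)\;\ge\;\tfrac{1}{n}\,\E N.
\]
Since the speed is $n$, the factor $1/n$ contributes only $\frac{\log n}{n}\to0$ to the rate, and the lower bound then follows from the \emph{same} one-point asymptotics already used for the upper bound. Thus the Pfaffian two-point analysis you anticipated as the hard step is never needed: the one-point densities $S_n^{\C,\C}(z,z)$ and $S_n^{\R,\R}(x,x)$ in (\ref{realkernel_c})--(\ref{realkernel_r}) suffice for both directions. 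Your approach would work, but the paper's is strictly simpler and dissolves precisely the obstacle you isolated.
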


\begin{remark}
 As indicated above, in the large deviation regime, the largest real eigenvalue of the real Ginibre ensemble dominates the largest complex eigenvalue. This explains to some extent the `Saturn effect' observed from the numerical simulations in Figures \ref{fig:realandcomplex} and \ref{fig:complexvsreal}, which was first reported in \cite[Section 5]{RS14}. Hence large deviation estimates in (\ref{rate_func}) are dramatically different from the typical fluctuation estimates  in (\ref{gamma})--(\ref{informal}), where the largest real eigenvalue in (\ref{tail_real}) is typically smaller than the largest complex eigenvalue by comparing their centers and fluctuations.
\end{remark}

\begin{figure}[htbp]
	\includegraphics[width=14cm]{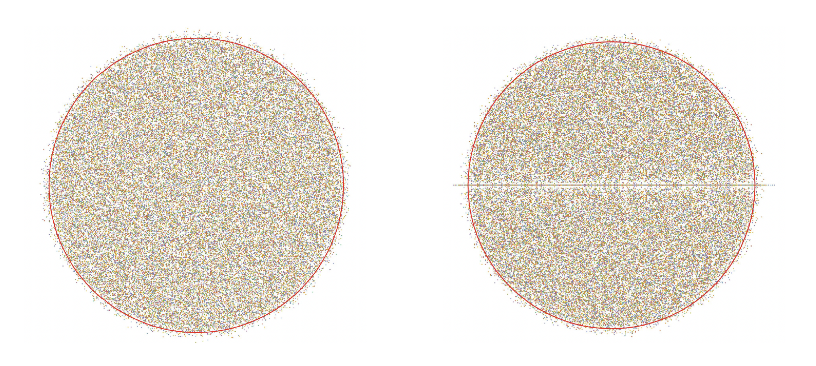}
	\centering
	\caption{We sample 300 complex Ginibre matrices (left) and real Ginibre matrices (right) of size $200\times 200$ and plot their eigenvalues on the same figure.  It seems that the rightmost eigenvalue of the real Ginibre ensemble is from real eigenvalues. This phenomenon, called `Saturn effect', might be caused by some rare events that happen during 300 samplings (represented by different colors), where the largest real eigenvalue is much larger than the complex eigenvalues.}
	\label{fig:realandcomplex}
\end{figure}

\begin{figure}[htbp]
	\includegraphics[width=10cm]{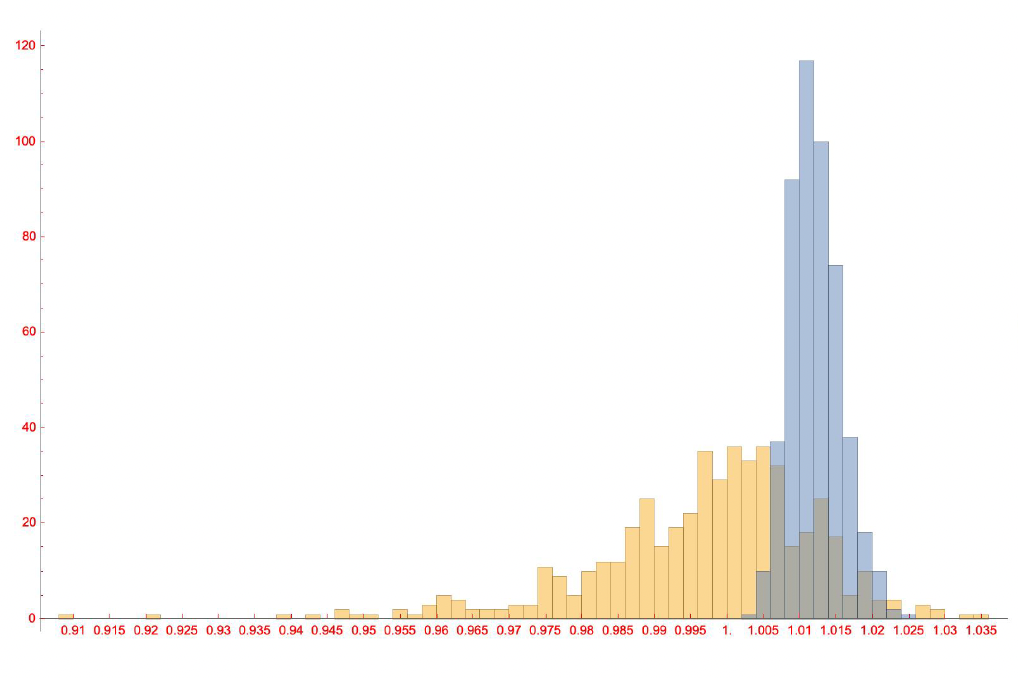}
	\centering
	\caption{ 
    We sample 500 real Ginibre matrices of size $5000 \times 5000$ and plot the histogram for the largest real eigenvalues (orange) and the largest complex eigenvalues in modulus (blue).  It seems that more real eigenvalues are deviating from the threshold 1.03 than the complex eigenvalues.}
	\label{fig:complexvsreal}
\end{figure}
 
\nc

\smallskip	

Moreover, we derive the following moderate deviation estimates for the extremal eigenvalues of Ginibre ensembles, which bridges the gap between the large deviation regime in Theorem~\ref{thm:main} and the typical fluctuating regime in (\ref{gamma})--(\ref{informal}) and (\ref{tail_real}).

\begin{theorem}[Moderate deviations]\label{MDP}
	Consider the real~$(\beta=1)$ and complex~$(\beta=2)$ Ginibre ensemble. For any $\sqrt{\frac{\gamma_n}{4n}} \leq d_n \ll 1$ with $\gamma_n$ in (\ref{informal}), 
there exist constants $C_1,C_2>0$ such that
	\begin{align}\label{mdp_result_1}
		&\P \Big( \max_{i=1}^n |\sigma_i| \geq 1+d_n \Big) \leq  \frac{C_1}{\sqrt{n} (d_n)^2} e^{-2n(d_n)^2 (1-O(d_n))}+  \frac{C_2}{\sqrt{n}d_n} e^{-n(d_n)^2(1-O(d_n))}\one_{\beta=1}.
	\end{align}
For any $\sqrt{\frac{\gamma'_n}{4n}} \leq d_n \ll 1$ with $\gamma'_n$ in (\ref{gamma}), 
there exist constants $C'_1,C'_2>0$ such that
	\begin{align}\label{mdp_result_2}
		&\P \Big( \max_{i=1}^n \Re \sigma_i \geq 1+d_n \Big) \leq   \frac{C_1'}{n (d_n)^{5/2}} e^{-2n(d_n)^2 (1-O(d_n))}+  \frac{C_2'}{\sqrt{n}d_n} e^{-n(d_n)^2(1-O(d_n))} \one_{\beta=1}.
	\end{align}
In particular, in the real case~($\beta=1$), for any $n^{-1/2} \ll d_n \ll 1$, there exists $C>0$ such that
\begin{align}\label{result_real}
	&\P \Big( \max_{\sigma_i \in \R} \sigma_i \geq 1+d_n \Big) \leq  \frac{C}{\sqrt{n}d_n} e^{-n(d_n)^2(1-O(d_n))},
\end{align}
and the same also holds true for $\max_{\sigma_i \in \R} |\sigma_i|$. Finally, for any $\sqrt{\log n/n} \ll d_n \ll 1$, we have
\begin{align}\label{MDP_formal}
	\lim_{n \rightarrow \infty} \frac{1}{n (d_n)^2}\log \P \Big( \max_{i=1}^n |\sigma_i| \geq 1+t d_n \Big)=-\beta t^2, \qquad t>0.
\end{align}
The same statement as in (\ref{MDP_formal}) also holds true for $\max_{i=1}^n \Re \sigma_i$.

\end{theorem}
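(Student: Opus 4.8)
\emph{Proof proposal for the final statement, (\ref{MDP_formal}).}
Write $a_n:=1+td_n$ with $t>0$ fixed. Two preliminary remarks organize the whole argument. First, since $I_\beta(t)=\tfrac\beta2(t^2-2\log t-1)$ vanishes to second order at $t=1$, a Taylor expansion gives $I_\beta(1+\epsilon)=\beta\epsilon^2(1+O(\epsilon))$, hence $I_\beta(a_n)=\beta t^2d_n^2(1+O(d_n))$. Second, the hypothesis $\sqrt{\log n/n}\ll d_n\ll1$ forces $nd_n^2\gg\log n$, so that any prefactor polynomial in $n$ and in $d_n^{-1}$ becomes negligible after applying $\frac1{nd_n^2}\log(\cdot)$. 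The plan is thus to locate both $\P(\max_i|\sigma_i|\ge a_n)$ and $\P(\max_i\Re\sigma_i\ge a_n)$ between $\mathrm{poly}(n)^{-1}e^{-nI_\beta(a_n)(1+o(1))}$ and $\mathrm{poly}(n)\,e^{-nI_\beta(a_n)(1+o(1))}$, from which $\lim_n\frac1{nd_n^2}\log\P=-\beta t^2$ follows at once.

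For the upper bound I would simply invoke the already-established estimates (\ref{mdp_result_1}) and (\ref{mdp_result_2}) with $d_n$ replaced by $td_n$, which is admissible since $\sqrt{\gamma_n/(4n)}\le\sqrt{\log n/n}\ll td_n\ll1$ (and likewise for $\gamma'_n$). For $\beta=2$ the only surviving term is $\mathrm{poly}(n)\,e^{-2nt^2d_n^2(1-O(d_n))}$, matching $\beta t^2=2t^2$; for $\beta=1$ the slowly decaying term $\mathrm{poly}(n)\,e^{-nt^2d_n^2(1-O(d_n))}$ dominates, matching $\beta t^2=t^2$. Dividing the logarithm by $nd_n^2$ and using $nd_n^2\gg\log n$ yields $\limsup_n\frac1{nd_n^2}\log\P\le-\beta t^2$ for both statistics and both values of $\beta$.

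The lower bound is where the content lies, and it splits by ensemble. \emph{Complex spectral radius}: Kostlan's identity (\ref{kostlan}) gives $\max_i|\sigma_i|\stackrel{\mathrm d}{=}(2n)^{-1/2}\max_{k\le n}\chi_{2k}\ge(2n)^{-1/2}\chi_{2n}$, so it suffices to bound $\P(\chi_{2n}^2\ge 2na_n^2)$ from below; restricting the Gamma density integral to an interval of length one around $2na_n^2$ and applying Stirling produces $\ge c\,n^{-1/2}e^{-nI_2(a_n)}$, i.e.\ $e^{-2nt^2d_n^2(1+o(1))}$. \emph{Complex rightmost eigenvalue}: here there is no Kostlan identity, so I would use the determinantal structure (\ref{det_pp}). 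Setting $N_n:=\#\{i:\Re\sigma_i\ge a_n\}$, one has $\E N_n=\frac n\pi\int_{\{\Re z\ge a_n\}}e^{-n|z|^2}\sum_{l=0}^{n-1}\frac{(n|z|^2)^l}{l!}\,\dd^2z$, and, the kernel being Hermitian, the truncated two-point intensity is dominated by $\rho_n\otimes\rho_n$, so $\E[N_n(N_n-1)]\le(\E N_n)^2$ and Paley--Zygmund gives $\P(N_n\ge1)\ge\tfrac12\E N_n$ once $\E N_n\le1$. Bounding the sum below by its top summand (legitimate because $a_n^2>1$) and using Stirling gives $\rho_n(a_n)\ge c\sqrt n\,e^{-nI_2(a_n)}$; integrating over a box of sides of order $(nd_n)^{-1}$ and $(nd_n)^{-1/2}$ adjacent to $z=a_n$, on which the intensity drops by at most a constant factor, gives $\E N_n=e^{-2nt^2d_n^2(1+o(1))}\to0$, and hence $\P(\max_i\Re\sigma_i\ge a_n)\ge\tfrac12\E N_n=e^{-2nt^2d_n^2(1+o(1))}$.

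For the real Ginibre I would exploit the `Saturn effect' directly: $\max_i|\sigma_i|\ge\max_i\Re\sigma_i\ge\max_{\sigma_i\in\R}\sigma_i$ (using $\Re\sigma_i=\sigma_i$ on real eigenvalues), so it is enough to bound $\P(\max_{\sigma_i\in\R}\sigma_i\ge a_n)$ from below. Running the same first/second-moment scheme for the point process of \emph{real} eigenvalues, using the explicit one-point intensity $\rho_n^{\R}$ coming from the Pfaffian correlation kernel of \cite{BS09,tail_0}, which near $x=a_n$ equals $\mathrm{poly}(n)\,e^{-nI_1(a_n)}$, together with the analogous domination of its two-point intensity by $\mathrm{const}\cdot\rho_n^{\R}\otimes\rho_n^{\R}$, gives $\P(\max_{\sigma_i\in\R}\sigma_i\ge a_n)\ge e^{-nI_1(a_n)(1+o(1))}=e^{-nt^2d_n^2(1+o(1))}$, i.e.\ $\liminf_n\frac1{nd_n^2}\log\P\ge-t^2=-\beta t^2$. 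Combining with the upper bound establishes (\ref{MDP_formal}) for all four cases — both extremal statistics in both ensembles. I expect the main obstacle to be precisely this real-ensemble lower bound: one needs asymptotics of the real-eigenvalue density near the soft edge that are uniform as $x\downarrow1$ at the scale $x-1\asymp d_n$, \emph{and} a second-moment bound on the Pfaffian process ensuring that $\E[\#\{\text{real ev}\ge a_n\}]$ — rather than a much smaller quantity — governs $\P(\exists\text{ real ev}\ge a_n)$ in this moderate regime; the determinantal case is comparatively soft because the kernel is explicit and its truncated correlations are automatically subtractive.
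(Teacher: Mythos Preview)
Your upper bound is exactly the paper's argument. For the lower bound, however, you work much harder than necessary. The paper uses the one-line observation
\[
\P\bigl(N\ge 1\bigr)\;\ge\;\frac{1}{n}\,\E[N],
\]
valid because the number $N$ of eigenvalues in any region is trivially bounded by $n$; combined with the already-computed lower bounds on $\E[N]$ (the same one-point integrals that gave the upper bounds, now evaluated from below), this yields $\P(\max\ge 1+td_n)\ge n^{-O(1)}e^{-\beta nt^2d_n^2(1+o(1))}$, and the hypothesis $nd_n^2\gg\log n$ swallows the $n^{-O(1)}$ prefactor. This works uniformly for both statistics and both ensembles, with no second-moment input at all.

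Your route is still correct in the complex case: Kostlan for the spectral radius is fine, and the Paley--Zygmund argument for the rightmost eigenvalue is valid since $\rho_2\le\rho_1\otimes\rho_1$ holds for any determinantal process with Hermitian kernel. But in the real case the obstacle you flag---a second-moment bound for the Pfaffian real-eigenvalue process---is a genuine one, and not addressed in the paper or in the standard references you cite; Pfaffian point processes do not automatically satisfy $\rho_2\le C\rho_1\otimes\rho_1$. The paper's trivial bound $\P(N\ge1)\ge\E[N]/n$ sidesteps this entirely, so your proposal carries an unnecessary gap that disappears once you notice that the cost of $1/n$ is already negligible at the scale $nd_n^2\gg\log n$.
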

We remark that the deviation estimates in (\ref{result_real}) for the real eigenvalues ($\beta=1$) are consistent with the Gaussian right-tail asymptotics of the limiting distribution as stated in (\ref{tail_real}).

\medskip

Finally, we obtain the following small deviations for the second and third terms in the asymptotic expansions (\ref{gamma})--(\ref{informal}). These deviation estimates are at most polynomially small and are indeed universal for general i.i.d.~matrices. The universality follows from the proof introduced in \cite{gumbel,maxRe} with some modifications. More precisely, we consider any matrix $X$ with i.i.d.~entries $x_{ab}\stackrel{\mathrm{d}}{=}n^{-1/2}\chi$, where $\chi$ satisfies $\E \chi=0$, $\E |\chi|^2=1$, additionally $\E \chi^2=0$ in the complex case, and $\chi$ has all the finite moments $$\E\big|\chi\big|^k\le C_k, \qquad k\in \N,$$ 
for some constants $C_k>0$.  To simplify the arguments, we also assume the probability measure $\chi$ has a density function $\phi$, and there exist some constants $a,b>0$ such that 
\begin{equation}\label{eq:hmb}
\phi \in L^{1+a}(\C), \qquad\qquad  \|\phi\|_{1+a} \leq n^{b}.
\end{equation}
Here we equip $\C$ with Lebesgue measure on $\R^2$.  
This condition ensures an effective lower bound on the smallest singular value that will be used in the proof following \cite{gumbel,maxRe}. This restriction can be removed using the argument in \cite[Section
6.1]{zbMATH06420677}; see more explanations in Remark 2.2 of \cite{maxRe}.

\begin{theorem}[Small deviations]\label{SDP}
	Given any real~$(\beta=1)$ or complex~$(\beta=2)$ i.i.d.~matrix defined as above, for any $s,t> 1$, there exists constants $C, C', C_s,C_t>0$ such that
	\begin{align}
		& \P \Big( \max_{i=1}^n |\sigma_i| \geq 1+s\sqrt{\frac{\gamma_n}{4 n}} \Big) \leq C (\log n)^{C_s} \Big(  n^{-\frac{s^2-1}{2}}+n^{-\frac{s^2}{4}}\one_{\beta=1} \Big), \label{small_complex1}\\
		& \P \Big( \max_{i=1}^n \Re \sigma_i \geq 1+t\sqrt{\frac{\gamma_n'}{4 n}} \Big)  \leq  C' (\log n)^{C_t}   \Big(  n^{-\frac{t^2-1}{4}}+n^{-\frac{t^2}{8}}\one_{\beta=1} \Big),\label{small_complex2}
	\end{align}
for sufficiently large $n$, with $\gamma_n$, $\gamma_n'$ given in (\ref{gamma})--(\ref{informal}). Moreover, for any $1\ll s_n \ll \log n$, 
\begin{align}\label{gumbel_tail}
\P \Big( \max_{i=1}^n |\sigma_i| \geq 1+\sqrt{\frac{\gamma_n}{4 n}} +\frac{s_n}{\sqrt{4n \gamma_n}} \Big) \leq C'' e^{-s_n},
\end{align}
and the same estimate also holds true for $\max_{i=1}^n \Re \sigma_i$ with $\gamma'_n$ in (\ref{gamma}).
\end{theorem}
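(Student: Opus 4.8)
The plan is to separate two issues. For the Ginibre ensembles the four estimates reduce, by elementary substitution, to the moderate deviations already recorded in Theorem~\ref{MDP}; the genuinely new content is the universality for general i.i.d.\ matrices, which I would obtain by running the proofs of \cite{gumbel,maxRe} and extracting quantitative bounds. Concretely, for Ginibre I would deduce \eqref{small_complex1}--\eqref{small_complex2} from Theorem~\ref{MDP} by taking $d_n=s\sqrt{\gamma_n/4n}$, respectively $d_n=t\sqrt{\gamma'_n/4n}$: these lie in the admissible range $\sqrt{\gamma_n/4n}\le d_n\ll1$ because $s,t>1$ are fixed, and with $\gamma_n=\log n-2\log\log n-\log 2\pi$ (resp.\ $\gamma'_n$ in \eqref{gamma}) one has $e^{-2nd_n^2}=n^{-s^2/2}(\log n)^{s^2}(2\pi)^{s^2/2}$ and $e^{-nd_n^2}=n^{-s^2/4}(\log n)^{s^2/2}(2\pi)^{s^2/4}$, so the right-hand sides of \eqref{mdp_result_1}--\eqref{mdp_result_2} become the claimed expressions; the prefactors $1/(\sqrt n d_n^2)\asymp\sqrt n/\log n$ and $1/(nd_n^{5/2})\asymp n^{1/4}/(\log n)^{5/4}$ are precisely what promote $n^{-s^2/2}$, $n^{-t^2/4}$ to $n^{-(s^2-1)/2}$, $n^{-(t^2-1)/4}$, and the factor $e^{O(nd_n^3)}=1+o(1)$ is absorbed into $C,C'$. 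For \eqref{gumbel_tail} I would instead substitute $d_n=\sqrt{\gamma_n/4n}+s_n/\sqrt{4n\gamma_n}$ (still $\ge\sqrt{\gamma_n/4n}$ and $\ll1$ since $s_n\ll\log n$), so that $2nd_n^2=\tfrac{1}{2}\gamma_n+s_n+s_n^2/(2\gamma_n)$ and hence $e^{-2nd_n^2}\le(2\pi)^{1/2}(\log n)\,n^{-1/2}e^{-s_n}$; this exactly cancels the prefactor $\asymp\sqrt n/\log n$ in \eqref{mdp_result_1} and leaves the complex contribution $\lesssim e^{-s_n}$, while the real-eigenvalue term in the $\beta=1$ case is $\asymp n^{-1/4}e^{-s_n/2}\le e^{-s_n}$, the last inequality using $s_n\ll\log n$. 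The version for $\max_i\Re\sigma_i$ is identical with $\gamma_n$ replaced by $\gamma'_n$.

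For a general i.i.d.\ matrix Theorem~\ref{MDP} is unavailable, and I would follow \cite{gumbel,maxRe}. Since all eigenvalues lie in $\{|z|\le1+O(n^{-1/2+\epsilon})\}$ with overwhelming probability, only the edge region $|z|=1+\Theta(\sqrt{\log n/n})$ is relevant, and as Theorem~\ref{SDP} asks only for upper bounds the first-moment (Markov) inequality suffices: $\P(\max_i|\sigma_i|\ge r)\le\E\,\#\{i:|\sigma_i|\ge r\}$ and $\P(\max_i\Re\sigma_i\ge r)\le\E\,\#\{i:\Re\sigma_i\ge r\}$, and in this window the edge count is Poisson-like with vanishing mean, so these bounds are of the right order up to the $(\log n)^C$ factors. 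In the real case I would split each count into its real and its genuinely complex part. The complex (for $\beta=2$, all) eigenvalues can be controlled via Girko's Hermitization: for a smooth cutoff $f$ approximating $\one_{\{|z|\ge r\}}$ (resp.\ $\one_{\{\Re z\ge r\}}$) at a small scale one has $\sum_i f(\sigma_i)=\tfrac{1}{2\pi}\int\Delta f(z)\,\log|\det(X-z)|\,\dd^2z$, and the $z$-dependent local law for the Hermitization of $X-z$ together with the concentration of $\log|\det(X-z)|$ from \cite{gumbel,maxRe} gives that the expected count does not exceed its Ginibre value, up to an adjustment of the $(\log n)^C$-power; the density hypothesis \eqref{eq:hmb} enters precisely here, securing the a priori bound $s_{\min}(X-z)^{-1}\le n^{C}$ with overwhelming probability and thus keeping $\log|\det(X-z)|$ integrable. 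The real eigenvalues in the case $\beta=1$ are handled by the corresponding universality statement of \cite{maxRe}; combined with the Ginibre computation above (whose real-eigenvalue intensity comes from the Pfaffian kernel of \cite{BS09}) this produces the terms $n^{-s^2/4}\one_{\beta=1}$ and $n^{-t^2/8}\one_{\beta=1}$. Feeding the Ginibre bounds of the first paragraph into these comparisons then yields \eqref{small_complex1}--\eqref{gumbel_tail} in general.

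The main obstacle is this last step: transferring the sharp edge asymptotics from Ginibre to a general i.i.d.\ matrix while preserving the leading exponential rate $e^{-2nx^2}$ (resp.\ $e^{-nx^2}$ for real eigenvalues, writing $r=1+x$) and losing at most polynomial factors. Concretely, one must control $\E\log|\det(X-z)|$ and its fluctuations uniformly for $|z|$ in the near-edge annulus $|z|=1+\Theta(\sqrt{\log n/n})$ — a regime of the log-determinant that the bulk analysis does not reach and that needs the refinements in \cite{gumbel,maxRe} — and, in the real case, disentangle the real and genuinely complex eigenvalue contributions so that the two exponents $nx^2$ and $2nx^2$ appear with their correct constants, which is the analytic origin of the two summands in \eqref{small_complex1}--\eqref{small_complex2}. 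Once those inputs are in place, what remains are the elementary substitutions above and the Laplace and incomplete-Gamma estimates already underlying Theorem~\ref{MDP}.
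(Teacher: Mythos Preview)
Your proposal is correct and follows essentially the same approach as the paper: the Ginibre bounds are obtained from Theorem~\ref{MDP} by precisely the substitutions you describe, and the extension to general i.i.d.\ matrices proceeds via a first-moment (Markov) bound together with a Girko-formula comparison to Ginibre, using the local-law and smallest-singular-value inputs of \cite{gumbel,maxRe}. The only cosmetic difference is that the paper compares the \emph{total} eigenvalue count to its Ginibre expectation (which already carries both the complex and the real-eigenvalue contributions through the kernel formulas \eqref{realkernel_c}--\eqref{realkernel_r}) rather than splitting into real and genuinely complex parts as you propose; either route works.
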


 We remark that the tail bound in (\ref{gumbel_tail}) also agrees with the Gumbel right-tail asymptotics as stated in (\ref{tail_complex}). In the complex case~($\beta=2$), when evaluated beyond the threshold $1+\sqrt{\frac{\log n}{4n}}$, the right tail bound in (\ref{small_complex2}) is $n^{-1/4}$ smaller than that in (\ref{small_complex1}).
This $n^{-1/4}$ factor is mainly because the regime where the rightmost eigenvalue (maximum along a fixed direction) appears is almost $n^{-1/4}$ smaller than that of the spectral radius (maximum along all directions). However, it becomes different in the real case~($\beta=1$). If the evaluated threshold is beyond $1+\sqrt{\frac{\log n}{2n}}$, then the main contributions to the right-tail bounds are instead from the real eigenvalues, hence the right-tail bounds in (\ref{small_complex1})--(\ref{small_complex2}) are almost of the same order.

\bigskip
{\it Conventions and notations:} For $n$-dependent positive quantities $f_n,g_n$ we use the notation $f_n\ll g_n$ to denote that $\lim_{n\to\infty} (f_n/g_n)=0$.  For positive quantities $f,g$ we write $f\lesssim g$ and $f\sim g$ if $f \le C g$ or $c g\le f\le Cg$, 
respectively, for some constants $c,C>0$ independent from $n$. Throughout the paper $c,C>0$ (resp.~$c_t,C_t>0$) denote small and large absolute constants (resp.~constants depending only on $t$), respectively, which may change from line to line. 
In the following we will use the notations $\P^{\mathrm{Gin}(\C)}$~(or $\E^{\mathrm{Gin}(\C)}$) and $\P^{\mathrm{Gin}(\R)}$~(or $\E^{\mathrm{Gin}(\R)}$) to denote the probability~(or expectation) for the complex and real Ginibre matrix, respectively.

\section{Proof of Theorem \ref{thm:main}}	

\subsection{Proof for the complex Ginibre Ensemble}

To prove Theorem \ref{thm:main} in the complex case, it suffices to prove the following proposition. 
	
		\begin{proposition}\label{prop_gin}
		For any $t<1$, we have
		\begin{align}\label{eq:ubdcp}
			\limsup_{n\to\8} \frac1n\log \P^{\mathrm{Gin}(\C)}(\max_{i=1}^n \{|\sigma_i|\}\le t)=-\8
		\end{align}
		and for any $t \geq 1$,
		\begin{align}\label{eq:main}
			\lim_{n\to\8} \frac1n\log \P^{\mathrm{Gin}(\C)}(\max_{i=1}^n \{|\sigma_i|\}\ge t) = -(t^2-2 \log t-1).
		\end{align}
	 The statements (\ref{eq:ubdcp})--(\ref{eq:main}) also hold true for $\max_{i=1}^n \Re \sigma_i$.
	\end{proposition}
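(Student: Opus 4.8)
The strategy is to obtain \eqref{eq:ubdcp} from the macroscopic LDP of Theorem~\ref{thm:ldp_esd}, and \eqref{eq:main} from Kostlan's identity \eqref{kostlan} for the spectral radius and from the determinantal structure \eqref{det_pp} for the rightmost eigenvalue.

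\emph{Step 1: the bound \eqref{eq:ubdcp}.} If $\max_i|\sigma_i|\le t$ (respectively $\max_i\Re\sigma_i\le t$), then $\mu_n$ belongs to the set $F_t$ of probability measures on $\C$ supported in the closed disk $\{|z|\le t\}$ (respectively in the closed half-plane $\{\Re z\le t\}$). By the portmanteau theorem $F_t$ is weakly closed, and for $t<1$ it does not contain the circular law $\mu_D$; since $I$ is a good rate function with unique zero $\mu_D$, a standard compactness argument (lower semicontinuity of $I$, compactness of its sublevel sets) gives $\delta_t:=\inf_{F_t}I>0$. The LDP upper bound of Theorem~\ref{thm:ldp_esd} then yields $\P(\mu_n\in F_t)\le e^{-(\delta_t+o(1))n^2}$, so $\frac1n\log\P^{\mathrm{Gin}(\C)}(\max_i|\sigma_i|\le t)\to-\8$, and likewise for $\max_i\Re\sigma_i$. (For the spectral radius one can instead argue from \eqref{kostlan}, writing $\P^{\mathrm{Gin}(\C)}(\max_i|\sigma_i|\le t)=\prod_{k=1}^n\P(\chi_{2k}^2\le2nt^2)$ and applying a Cramér bound for sums of standard exponentials to the $\Theta(n)$ indices $k\in[\tfrac{n(1+t^2)}2,n]$.)

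\emph{Step 2: the spectral radius in \eqref{eq:main}.} By \eqref{kostlan}, $\P^{\mathrm{Gin}(\C)}(\max_i|\sigma_i|\ge t)=\P(\max_{1\le k\le n}\chi_{2k}\ge\sqrt{2n}\,t)$, and since $\chi_{2k}$ is stochastically increasing in $k$ this lies between $\P(\chi_{2n}\ge\sqrt{2n}\,t)$ and $n\,\P(\chi_{2n}\ge\sqrt{2n}\,t)$; as $\frac1n\log n\to0$, it suffices to compute $\lim_n\frac1n\log\P(\chi_{2n}\ge\sqrt{2n}\,t)$. Writing $\chi_{2n}^2/2$ as a sum of $n$ i.i.d.\ standard exponentials, this is a Cramér probability whose rate at level $t^2\ge1$ equals $t^2-1-\log(t^2)=t^2-2\log t-1$; equivalently, Stirling together with Laplace's method applied to $\P(\chi_{2n}^2\ge2nt^2)=\frac{n^n}{\Gamma(n)}\int_{t^2}^\infty y^{n-1}e^{-ny}\,\dd y$ gives the same, the function $\log y-y$ being maximized over $[t^2,\infty)$ at the left endpoint $y=t^2$ whenever $t\ge1$. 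This establishes \eqref{eq:main} for $\max_i|\sigma_i|$.

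\emph{Step 3: the rightmost eigenvalue in \eqref{eq:main}.} The upper bound is immediate from $\Re\sigma_i\le|\sigma_i|$, giving $\P^{\mathrm{Gin}(\C)}(\max_i\Re\sigma_i\ge t)\le\P^{\mathrm{Gin}(\C)}(\max_i|\sigma_i|\ge t)$ and hence $\limsup_n\frac1n\log\P^{\mathrm{Gin}(\C)}(\max_i\Re\sigma_i\ge t)\le-(t^2-2\log t-1)$ by Step~2. For the matching lower bound, fix $t>1$, set $A:=\{z:\Re z\ge t\}$, and recall from \eqref{det_pp} that the eigenvalues form a determinantal process whose self-adjoint correlation operator $K$ on $L^2(\C)$, with $0\le K\le I$, has integral kernel $\frac n\pi e^{-\frac n2(|z|^2+|w|^2)}\K_n(z,w)$ and one-point function $\rho_n(z)=K(z,z)=\frac n\pi e^{-n|z|^2}\sum_{l=0}^{n-1}\frac{(n|z|^2)^l}{l!}$. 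Writing $K_A$ for the compression of $K$ to $L^2(A)$ and using $1-e^{-x}\ge x-\tfrac{x^2}2\ge\tfrac x2$ on $[0,1]$,
\[
\P^{\mathrm{Gin}(\C)}(\max_i\Re\sigma_i\ge t)=1-\det(I-K_A)\ge 1-e^{-\Tr K_A}\ge\tfrac12\Tr K_A=\tfrac12\int_A\rho_n(z)\,\dd^2 z,
\]
which is valid once $\Tr K_A\le1$; this holds for large $n$ because $\int_A\rho_n\le\int_{|z|\ge t}\rho_n=\E^{\mathrm{Gin}(\C)}[\#\{i:|\sigma_i|\ge t\}]=\sum_{k=1}^n\P(\chi_{2k}^2\ge2nt^2)\to0$ for $t>1$. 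It then remains to show $\frac1n\log\int_A\rho_n\to-(t^2-2\log t-1)$: the upper bound follows from $\int_A\rho_n\le n\,\P(\chi_{2n}^2\ge2nt^2)$ and Step~2, and the lower bound by keeping only the $l=n-1$ term of $\rho_n$ and integrating over $[t,t+\delta]\times[-\delta,\delta]\subseteq A$, which by Stirling contributes at least $\exp\big(n(1+2\log t-t^2)-C_t n\delta+o(n)\big)$; letting $\delta\downarrow0$ closes the gap. Finally the endpoint $t=1$ follows by monotonicity, $\P^{\mathrm{Gin}(\C)}(\max_i\Re\sigma_i\ge1)\ge\P^{\mathrm{Gin}(\C)}(\max_i\Re\sigma_i\ge1+\delta)$, whence the $\liminf$ is $\ge-((1+\delta)^2-2\log(1+\delta)-1)\to0$ as $\delta\downarrow0$, matching the trivial bound $0$.

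\emph{Main obstacle.} Everything except Step~3 is essentially bookkeeping; the delicate point is the lower bound for the rightmost eigenvalue. Lacking a Kostlan-type reduction to independent variables, one must go through the determinantal structure, and care is needed in (i) passing from ``some eigenvalue lies in $A$'' to the expected count via $1-\det(I-K_A)\ge1-e^{-\Tr K_A}\ge\tfrac12\Tr K_A$ (which requires $\Tr K_A\le1$, that is $t>1$), and (ii) the logarithmic asymptotics of $\int_A\rho_n$, whose lower bound must be extracted by localizing at $z=t$ and retaining only the leading term of $\rho_n$.
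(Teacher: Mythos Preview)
Your proof is correct, but the route differs from the paper's in two notable ways.

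For the spectral radius in \eqref{eq:main} you exploit Kostlan's identity \eqref{kostlan} to reduce directly to a Cram\'er large deviation for a sum of i.i.d.\ exponentials, bypassing the asymptotics of the truncated exponential series (Lemma~\ref{lemma:poly}) entirely. The paper instead bounds $\P(\max_i|\sigma_i|\ge t)$ above by $\E[\#\{|\sigma_i|\ge t\}]=\int_{|z|\ge t}K_n(z,z)\,\dd^2 z$ and below by $\frac1n$ times the same integral, and then evaluates $K_n(z,z)$ via Lemma~\ref{lemma:poly}. Your argument is shorter and more elementary here; the paper's one-point-function approach, on the other hand, is what is actually needed for the real Ginibre case (where Kostlan fails) and for the moderate-deviation refinements in Theorem~\ref{MDP}, so it is the more reusable template.

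For the lower bound on the rightmost eigenvalue you invoke the determinantal gap formula $\P(\exists\,\sigma_i\in A)=1-\det(I-K_A)\ge 1-e^{-\Tr K_A}\ge\tfrac12\Tr K_A$ (valid once $\Tr K_A\le1$), whereas the paper uses the cruder inequality $\P(\exists\,\sigma_i\in A)\ge\frac1n\,\E[\#\{\sigma_i\in A\}]$. Your bound is sharper by a factor $n$ at the level of the pre-exponential constant, but both yield the same $\frac1n\log$ limit. The paper's lower bound on $\int_A K_n(z,z)\,\dd^2z$ is obtained from Lemma~\ref{lemma:poly} and an explicit integration-by-parts estimate, while you localise near $z=t$, keep only the $l=n-1$ term of $\rho_n$, and apply Stirling directly; these are equivalent in effect. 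Your treatment of the endpoint $t=1$ via monotonicity in $t$ is also slightly different from the paper's, which simply cites the known fluctuation result \eqref{informal}.
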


Recall from~\cite{Mehta} that the eigenvalues of the complex Ginibre ensemble in (\ref{det_pp}) form a determinantal point process with $k$-point correlation function given by
\begin{align}\label{complex}
	p^{(k)}_n(z_1,\ldots,z_k)= 
	\det\Bigl( K_n(z_i, z_j)\Bigr)_{i,j=1}^k, \qquad K_n(z,w):= \frac{n}{\pi} e^{-n(\abs{z}^2+\abs{w}^2)/2} \mathfrak{e}_{n-1}(n z \ov w),
\end{align}
 where the polynomial $\mathfrak{e}_{n-1}$ is defined by
\begin{align}\label{eq:poly}
	\mathfrak{e}_{n-1}(z):=\sum^{n-1}_{k=0} \frac{z^k}{k!}=e^{z} \frac{\Gamma(n,z)}{\Gamma(n)}, \qquad \Gamma(n,z):=\int_z^\infty t^{n-1} e^{-t}\dd t,
\end{align}
with the integration contour going from $z\in \C$ to the real infinity. These polynomials have the following precise asymptotic estimates.
\begin{lemma}[Proposition 3 in \cite{poly1}]\label{lemma:poly}
Uniformly in $t> 0$, we have
	\begin{align}\label{aaa}
		e^{-nt} \mathfrak{e}_{n}(nt)=\one_{0\leq t<1} +\frac{1}{\sqrt{2}} \frac{\mu(t)t}{t-1} \mathrm{erfc}\big (\sqrt{n} \mu(t)\big) \left(1+O\Big(\frac{1}{\sqrt{n}}\Big)\right),
	\end{align}
with $\mu(t)=|t-1-\log t|^{1/2}$ and 
	\begin{align}\label{ccc}
		\mathrm{erfc}(t):=\frac{2}{\sqrt{\pi}} \int_{t}^\infty  e^{-s^2} \dd s =\frac{e^{-t^2}}{\sqrt{\pi} t} \left( 1-\frac{1}{2t^2}+O(|t|^{-4}) \right).
	\end{align}
\end{lemma}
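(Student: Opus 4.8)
\textbf{Proof proposal for Lemma \ref{lemma:poly}.}

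The starting point is the integral representation of the incomplete gamma function in (\ref{eq:poly}), which gives the exact identity
\begin{align}\label{eq:start}
	e^{-nt}\mathfrak{e}_n(nt) = \frac{\Gamma(n+1,nt)}{\Gamma(n+1)} = \frac{1}{n!}\int_{nt}^\infty s^n e^{-s}\,\dd s = \frac{n^{n+1}}{n!}\int_t^\infty u^n e^{-nu}\,\dd u,
\end{align}
after the rescaling $s = nu$. The plan is to analyze this last integral by the Laplace/saddle-point method. Writing $u^n e^{-nu} = e^{-n(u-\log u)}$, the exponent $g(u) := u - \log u$ has its unique minimum on $(0,\infty)$ at $u=1$, with $g(1)=1$ and $g''(1)=1$. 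So $\int_t^\infty e^{-ng(u)}\dd u$ behaves qualitatively differently depending on whether the lower endpoint $t$ lies to the left of the saddle ($t<1$, so the integral captures the full Gaussian peak plus the tail) or to the right ($t>1$, so it is a pure endpoint/tail contribution). First I would record the normalization: by Stirling, $\frac{n^{n+1}}{n!} = \sqrt{\frac{n}{2\pi}}\, e^{n}\,(1+O(n^{-1}))$, which will cancel the $e^{-n}=e^{-ng(1)}$ coming from the peak.

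For the regime $t \geq 1$ (endpoint-dominated): substitute $g(u)-1 = \mu(t)^2 + (\text{something})$; more precisely, since $g$ is strictly increasing and smooth on $[1,\infty)$ with $g(t)-1 = t-1-\log t = \mu(t)^2$, I would make the change of variables $v^2 = g(u)-1$, i.e. $v = \sqrt{u-1-\log u}$, which is a smooth increasing bijection from $[1,\infty)$ onto $[0,\infty)$ with $\frac{\dd u}{\dd v} = \frac{2v}{g'(u)} = \frac{2vu}{u-1}$; near $v=0$ this Jacobian tends to $2$ and is analytic. The integral becomes $e^{-n}\int_{\mu(t)}^\infty e^{-nv^2}\,\frac{2vu(v)}{u(v)-1}\,\dd v$. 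Multiplying by the Stirling prefactor $\sqrt{\tfrac{n}{2\pi}}e^{n}$ and pulling out the value of the Jacobian factor at the lower endpoint $v=\mu(t)$, namely $\frac{2t}{t-1}$, one gets the leading term $\sqrt{\tfrac{n}{2\pi}}\cdot \frac{2t}{t-1}\int_{\mu(t)}^\infty e^{-nv^2}\dd v \cdot(1+O(n^{-1/2}))$, which equals $\frac{1}{\sqrt 2}\cdot\frac{t}{t-1}\,\operatorname{erfc}(\sqrt n\,\mu(t))\cdot(1+O(n^{-1/2}))$ after recognizing $\sqrt{\tfrac{n}{2\pi}}\int_{\mu(t)}^\infty e^{-nv^2}\dd v = \tfrac12\operatorname{erfc}(\sqrt n\mu(t))$ — wait, $\frac{2}{\sqrt\pi}\int_a^\infty e^{-s^2}\dd s = \operatorname{erfc}(a)$, and with $s=\sqrt n v$ this is $\frac{2\sqrt n}{\sqrt\pi}\int_{\mu(t)}^\infty e^{-nv^2}\dd v$, so $\sqrt{\tfrac{n}{2\pi}}\int_{\mu(t)}^\infty e^{-nv^2}\dd v = \tfrac{1}{2\sqrt 2}\sqrt{\tfrac{2n}{\pi}}\int = \tfrac{1}{2\sqrt 2}\operatorname{erfc}(\sqrt n\mu(t))$, and including the factor $\frac{2t}{t-1}$ yields $\frac{1}{\sqrt 2}\frac{t}{t-1}\operatorname{erfc}(\sqrt n\mu(t))$, matching (\ref{aaa}). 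Note $\mu(t)t/(t-1) > 0$ for $t>1$. The error control: replacing the smooth function $\frac{2vu(v)}{u(v)-1}$ by its value at $v=\mu(t)$ costs a relative $O(v-\mu(t))$ against the weight $e^{-nv^2}$; splitting into $v-\mu(t)\lesssim n^{-1/2}$ and the complementary tail, and using that the derivative of the Jacobian factor is bounded uniformly on compact-in-$v$ sets (and controlled at infinity because $u(v)\sim v^2$), gives the claimed $(1+O(n^{-1/2}))$ uniformly in $t$ in, say, $t\in[1+\delta,\infty)$; the near-$t=1$ behavior needs separate care (see below).

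For the regime $0 < t < 1$ (peak captured): now $\int_t^\infty e^{-ng(u)}\dd u = \int_{-\infty}^\infty - \int_{-\infty}^t$, or more cleanly split at, say, $u=1$ — but the cleanest route is again the substitution $v = \sgn(u-1)\sqrt{|u-1-\log u|}$, which is now a smooth increasing bijection from $(0,\infty)$ onto $(-\infty,\infty)$; at the lower limit $u=t<1$ we get $v = -\mu(t)$. Then $e^{n}\sqrt{\tfrac{n}{2\pi}}\int_t^\infty e^{-ng(u)}\dd u = \sqrt{\tfrac{n}{2\pi}}\int_{-\mu(t)}^\infty e^{-nv^2}J(v)\,\dd v$ with $J(0)=2$; the full Gaussian integral $\sqrt{\tfrac{n}{2\pi}}\int_{-\infty}^\infty e^{-nv^2}\cdot 2\,\dd v = \sqrt{\tfrac{n}{2\pi}}\cdot 2\sqrt{\tfrac{\pi}{n}} = \sqrt 2$ — hmm, that gives $\sqrt 2$ not $1$, so I'd double-check the Stirling constant; the correct statement is $\frac{n^{n+1}}{n!}=\sqrt{\frac{n}{2\pi}}e^n(1+O(1/n))$ and $\sqrt{\tfrac{n}{2\pi}}\int_{-\infty}^\infty 2 e^{-nv^2}\dd v = \sqrt{\tfrac{n}{2\pi}}\cdot\frac{2\sqrt\pi}{\sqrt n} = \sqrt{\tfrac{2}{\pi}}\sqrt\pi=\sqrt 2$, so there is a stray $\sqrt2$; the resolution is that $J(0)=\lim_{v\to0}\frac{2v u(v)}{u(v)-1}$ and near $u=1$, $u-1-\log u \approx \frac12(u-1)^2$ so $v\approx \frac{u-1}{\sqrt2}$, giving $J(0) = \lim \frac{2\cdot\frac{u-1}{\sqrt2}\cdot 1}{u-1} = \sqrt 2$, hence $\sqrt{\tfrac{n}{2\pi}}\int 2 e^{-nv^2}$ should be $\sqrt{\tfrac{n}{2\pi}}\int \sqrt2\,e^{-nv^2}\,\dd v$... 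I will reconcile these constants carefully when writing the proof; the point is that the $\one_{0\le t<1}$ is exactly the contribution of the Gaussian peak, while the piece $\int_{-\mu(t)}^{\infty}$ minus $\int_{-\infty}^\infty$, i.e. $-\int_{-\infty}^{-\mu(t)}$, produces (after using $J(v)\to J$-at-endpoint and the erfc asymptotics) precisely the same $\frac{1}{\sqrt2}\frac{\mu(t)t}{t-1}\operatorname{erfc}(\sqrt n\mu(t))$-type correction but with the correct sign: for $t<1$, $t-1<0$ and $\mu(t)>0$, so $\frac{\mu(t)t}{t-1}<0$, i.e. the correction reduces the value $1$, as it must since $e^{-nt}\mathfrak e_n(nt)<1$ strictly. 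Thus the two regimes are unified into the single formula (\ref{aaa}), with $\mu(t)/(t-1)$ carrying the sign.

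The main obstacle — and the reason the statement claims uniformity in all $t>0$ — is the behavior near the saddle $t=1$, where the change-of-variables Jacobian factor $\frac{u}{u-1}$ and the ratio $\frac{\mu(t)}{t-1}$ are each singular but the product is regular (it tends to a finite limit as $t\to1$). Near $t=1$ one has $\mu(t)\to0$, $\operatorname{erfc}(\sqrt n\mu(t))\to1$, and both the main term and the $\one_{0\le t<1}$ must be tracked simultaneously; the cleanest fix is to keep the exact identity $e^{n}\sqrt{\tfrac n{2\pi}}\int_t^\infty e^{-ng(u)}\dd u = \sqrt{\tfrac n{2\pi}}\int_{v_0}^\infty e^{-nv^2}J(v)\dd v$ (with $v_0 = \sgn(t-1)\mu(t)$, valid for all $t>0$) and prove that this equals $\one_{v_0<0} + \tfrac1{\sqrt 2}\tfrac{\mu(t)t}{t-1}\operatorname{erfc}(\sqrt n\mu(t)) + O(n^{-1/2})$ with the $O(n^{-1/2})$ \emph{uniform in $v_0\in\R$}, by Taylor-expanding $J(v) = J(v_0) + (J(v)-J(v_0))$, handling the $J(v_0)$ piece exactly via erfc, and bounding the remainder $\int|J(v)-J(v_0)|e^{-nv^2}\dd v \lesssim n^{-1}\sup|J'| + (\text{far-tail})$ using $|J(v)-J(v_0)|\lesssim |v-v_0|\sup_{\text{between}}|J'|$ together with uniform bounds $|J'(v)|\lesssim 1$ on $|v|\le R$ and $J(v)=O(v)$, $J'(v)=O(1)$ as $v\to\pm\infty$ (from $u(v)\sim v^2$). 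This is exactly the content of Proposition 3 in \cite{poly1}, so in the paper one may simply cite it; the sketch above indicates how it is obtained should a self-contained argument be desired.
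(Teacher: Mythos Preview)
The paper does not prove this lemma at all: it is stated as a direct citation of Proposition~3 in \cite{poly1} and used as a black box. So there is no ``paper's own proof'' to compare against; your contribution is a self-contained sketch of what that cited result rests on.

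Your Laplace/saddle-point outline is the standard route and is essentially correct, including the change of variables $v=\sgn(u-1)\sqrt{u-1-\log u}$ and the recognition that the Jacobian $J(v)=2vu/(u-1)$ is smooth across $v=0$ with $J(0)=\sqrt{2}$. The constant confusion you flag resolves cleanly once you use the \emph{exact} identity $\frac{n^{n+1}}{n!}\int_0^\infty u^n e^{-nu}\,\dd u=1$ (equivalently $\sqrt{n/(2\pi)}\,e^n\cdot e^{-n}\int_{-\infty}^\infty e^{-nv^2}J(v)\,\dd v=1+O(n^{-1})$ via Stirling) rather than trying to approximate $J$ by $J(0)$ on the full line. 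With that in hand, for $t<1$ you split $\int_{-\mu(t)}^\infty=\int_{-\infty}^\infty-\int_{-\infty}^{-\mu(t)}$, the first piece gives the $\one_{0\le t<1}$ exactly, and the second piece, after replacing $J(v)$ by its endpoint value $J(-\mu(t))=-2\mu(t)t/(t-1)$, produces precisely $\frac{1}{\sqrt{2}}\frac{\mu(t)t}{t-1}\erfc(\sqrt{n}\mu(t))$ with the correct sign. Your $t>1$ computation is already correct. The only place that needs genuine care is the uniformity of the $O(n^{-1/2})$ as $t\to1$, where both $\mu(t)\to0$ and the endpoint approaches the saddle; here the Bleistein device (writing $J(v)=a+bv+(v^2-v_0^2)h(v)$ with $a,b$ chosen so that $h$ is smooth) gives uniform control more directly than the first-order Taylor bound you suggest, and is what underlies the cited result.
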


	\begin{proof}[Proof of Proposition \ref{prop_gin}]

The first estimate (\ref{eq:ubdcp}) follows directly from the LDP for the empirical spectral measures in Theorem \ref{thm:ldp_esd}. Indeed, we may take a bounded Lipschitz function $f:\C\to\R$ supported in $\{z\in\C: \Re z \ge t\}$ such that $f(z)=1$ for $\Re z>\frac{t+1}{2}$ where $0<t<1$. By the LDP for the empirical spectral measures $\mu_n$,
\begin{align*}
    \P(\max_{i=1}^n \{|\sigma_i|\}\le t)\le \P\Big(\max_{i=1}^n \Re \si_i\le t\Big)\le \P\Big(\Big|\int f \dd \mu_n -\frac1{\pi}\int f(z)\dd^2 z\Big|>\delta\Big)\le e^{-c n^2}
\end{align*}
for some $\delta>0$, where $c>0$ depends on $f$ and $\delta$.

We next prove the second estimate (\ref{eq:main}) for $t \geq 1$. For $t=1$, it follows directly from (\ref{informal}) that $\P(\max_{i=1}^n \{|\sigma_i|\}\ge 1) \geq c$ for some constant $c>0$. We hence focus on proving (\ref{eq:main}) for $t>1$.  To obtain the upper bound, we write
		\begin{align}\label{upper}
			\P(\max_{i=1}^n \{|\sigma_i|\}\ge t)=&\P \big( \#\{1\leq i \leq n: |\sigma_i| \geq t\} \geq 1\big) \leq \E \big[\#\{1\leq i \leq n: |\sigma_i| \geq t\}\big].
		\end{align}
Note that the one point correlation function for the eigenvalue process is given by $K_n(z,z)$ in	(\ref{complex}). Hence for $t> 1$, we have
\begin{align}\label{K_bound}
		\E \big[\#\{1\leq i \leq n: |\sigma_i| \geq t\}\big]	&= \int_{|z| \geq t}  K_n(z,z) \dd^2 z=\int_{|z| \geq t} \frac{n}{\pi} e^{-n \abs{z}^2} \mathfrak{e}_{n-1}(n |z|^2) \dd^2 z\nonumber\\
		&=\int_{|z| \geq t} \frac{n}{\pi \sqrt{2 \pi n} } \frac{1}{|z|^2-1} e^{-n \big(|z|^2-2\log |z|-1\big)}\dd^2 z \Big(1+O(n^{-1/2})\Big)\nonumber\\
  &    =\sqrt{\frac{2}{\pi}}\int_t^\8 \frac{r}{r^2-1}e^{-n(r^2-2\log r-1)+\frac12\log n} \dd r  \Big(1+O(n^{-1/2})\Big)       \nc \nonumber \\
		&\leq  C_t e^{-n \big(t^2-2\log t-1\big)  - \nc\frac{1}{2} \log n},
\end{align}
where we also used that from Lemma \ref{lemma:poly} 
$$\mathfrak{e}_{n-1}(nz)  =\frac{(ez)^{n} }{\sqrt{2\pi n}(z-1)}\Big(1+O\Big(\frac1{\sqrt n}\Big)\Big),$$ 
 and the following estimate from integration by parts 
\begin{align}
    &\frac{t^2 e^{-n(t^2-2\log t)}}{2n(t^2-1)^2} \Big[1-\frac{t^2+1}{n(t^2-1)^2} \Big]\le \int_{t}^\8 \frac{r}{r^2-1} e^{-n(r^2-2\log r)} \dd r\le \frac{t^2 e^{-n(t^2-2\log t)}}{2n(t^2- 1)^2},\label{eq:est2}
\end{align}
for $t>1$. One could obtain a similar lower bound from \eqref{eq:est2} as in (\ref{K_bound}), \ie
\begin{align}\label{K_bound_lower}
	\E \big[\#\{1\leq i \leq n: |\sigma_i| \geq t\}\big]
	&\geq  C'_t e^{-n \big(t^2-2\log t-1\big)-\frac{1}{2} \log n}.
\end{align}
Combining (\ref{K_bound}) with (\ref{upper}), we obtain the desired upper bound 
\begin{align}\label{upper_bound}
\P(\max_{i=1}^n \{|\sigma_i|\}\ge t) \leq C_t e^{-n \big(t^2-2\log t-1\big)-\frac{1}{2} \log n}.
\end{align}
 To obtain the matching lower bound, we have
		\begin{align}\label{lower}
	\P(\max_{i=1}^n \{|\sigma_i|\}\ge t)=\P \big( \#\{1\leq i \leq n: |\sigma_i| \geq t\} \geq 1\big) &\geq \frac{1}{n} \E \big[\#\{1\leq i \leq n: |\sigma_i| \geq t\}\big]\nonumber\\
     &\geq C'_t e^{-n \big(t^2-2\log t-1\big) -\frac{3}{2} \log n} \nc,
\end{align}
where we also used (\ref{K_bound_lower}). This together with (\ref{upper_bound}) proves (\ref{eq:main}).

\medskip

To end the proof, we consider the rightmost eigenvalue. Similar to (\ref{upper})--(\ref{K_bound}), we have 
\begin{align}\label{rightmost_bound}
	\P\big( \max_{i=1}^n \Re \sigma_i \geq t  \big)
	\leq&\E \big[\#\{1\leq i \leq n: \Re \sigma_i \geq t\}\big] = \int_\R \int_{x \geq t}  K_n(x+\ii y,x+\ii y) \dd x \dd y\nonumber\\
	\leq & \int_{|z| \geq t}  K_n(z,z) \dd^2 z
	\leq   C_t e^{-n \big(t^2-2\log t-1\big)-\frac{1}{2} \log n},
\end{align}	
for $t> 1$. One could obtain a similar lower bound as in (\ref{lower}), \ie 
		\begin{align}\label{lower_rt}
	\P\big(\max_{i=1}^n \Re \sigma_i\ge t\big)&
	\geq  \frac{1}{n} \E \big[\#\{1\leq i \leq n: \Re \sigma_i \geq t\}\big]\nonumber\\
	&=\frac{1}{n} \int_{\R} \int_{x\geq t} \frac{n}{\pi \sqrt{2 \pi n} } \frac{1}{x^2+y^2-1} e^{-n \big(x^2+y^2-\log(x^2+y^2)-1\big)} \dd x \dd y \Big(1+O(n^{-1/2})\Big)\nonumber\\
	&\geq  C \frac{1}{\sqrt{n}}  \int_{x\geq t}  \left( \int_{\R} \frac{1}{x^2+y^2-1} e^{-ny^2}  \dd y \right) e^{-n \big(x^2-\log(x^2)-1\big)} \dd x  \nonumber\\
 &\ge C\frac1{n} \int_t^\8 \frac1{x^2-1} e^{-n \big(x^2-\log(x^2)-1\big)} \dd x \notag \nc\\
	&\geq  C'_t e^{-n \big(t^2-2\log t-1\big)- 2\log n},
\end{align}
where we used that $\log (x^2+y^2) \geq \log (x^2)$, and for sufficiently large $n$,
\begin{align*}\int_0^\8 \frac{1}{x^2+y^2-1}e^{-ny^2}\dd y=&\frac{2n}{\sqrt{x^2-1}} \int_{0}^{\infty} \arctan\Big(\frac{y}{\sqrt{x^2-1}}\Big)y e^{-ny^2} \dd y\ge \frac{C}{\sqrt{n}(x^2-1)},
\end{align*}
together with the following estimates from integration by parts 
\begin{align}
  &\frac{te^{-n(t^2-2\log t)}}{2n(t^2-1)^2}\Big[1-\frac{3t^2+1}{2n(t^2-1)^2} \Big]\le \int_{t}^\8 \frac{1}{r^2-1} e^{-n(r^2-2\log r)} \dd r\le \frac{te^{-n(t^2-2\log t)}}{2n(t^2-1)^2},\label{eq:est3}  
\end{align}
for $t>1$. We hence finish the proof of Proposition~\ref{prop_gin}.
\end{proof}

\subsection{Proof for the real Ginibre Ensemble}
Given a real Ginibre matrix of dimension $n$ with $n=L+2M$, where $L$ is the number of real eigenvalues, and $M$ is the number of conjugated pairs of complex eigenvalues. The explicit formula for the joint density distribution of $L$ real eigenvalues and $M$ complex eigenvalues in the upper half plane was first introduced in \cite{LS91,Edel}. As the analog of (\ref{complex}), these eigenvalues indeed form a Pfaffian point process with the so-called $(l,m)$-correlation functions 
\cite[Theorem 8]{BS09}~(or see \cite[Propositions 2.1--2.2]{RS14}). In particular, the one point correlation functions for the complex and real eigenvalue process are given by \cite{Edel,real_ev}, respectively,  \ie
\begin{align}
S^{\C,\C}_n(z,z):=&\frac{\sqrt{2}n^{3/2}|\Im z|}{\sqrt{\pi}}  e^{2n (\Im z)^2}  \erfc(\sqrt{2n}\abs{\Im z}) e^{-n|z|^2} \mathfrak{e}_{n-2}(n |z|^2),\label{realkernel_c}\\
S_n^{\R,\R}(x,x):=&\sqrt{\frac{n}{2\pi}} e^{-n x^2} \mathfrak{e}_{n-2}(nx^2)+
\frac{n^{\frac{n}{2}} |x|^{n-1} e^{-\frac{n}{2}x^2}}{2^{\frac{n}{2}} \Gamma\big(\frac{n}{2}\big) \Gamma\big(\frac{n-1}{2}\big) } \int_0^{\frac{nx^2}{2}}  u^{\frac{n-3}{2}} e^{-u} \dd u
.\label{realkernel_r}
\end{align}
We remark that these are also part of the Pfaffian kernels introduced in \cite{BS09}.
\nc

\begin{proposition}\label{prop_real}
For any $t> 1$, we have
	\begin{align}
		\lim_{n\to\8} \frac1n\log \P^{\mathrm{Gin}(\R)}\Big(\max_{\sigma_i \in \C \setminus \R} \big|\sigma_i\big|\ge t\Big) =& -\big(t^2-2 \log t-1\big),\label{eq:main_complex}\\
	\lim_{n\to\8} \frac1n\log \P^{\mathrm{Gin}(\R)}\Big(\max_{\sigma_i \in \R} \sigma_i \ge t\Big) =& -\frac{t^2-2 \log t-1}{2}.\label{eq:main_real}
\end{align}
The same statement as in (\ref{eq:main_complex}) also holds true for $\max_{\sigma_i \in \C \setminus \R} \Re\sigma_i$, and (\ref{eq:main_real}) also holds true for $\max_{\sigma_i \in \R} |\sigma_i|$.
\end{proposition}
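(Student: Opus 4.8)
The plan is to mirror the proof of Proposition~\ref{prop_gin}. For every upper bound I would use the first-moment bound $\P(N\ge 1)\le \E N$, and for every lower bound the reverse inequality $\P(N\ge 1)\ge \E N/n$, which is valid because the relevant eigenvalue counts $N$ are nonnegative integers bounded by $n$; the first moments are then evaluated by integrating the one-point correlation functions $S_n^{\C,\C}$ and $S_n^{\R,\R}$ of (\ref{realkernel_c})--(\ref{realkernel_r}), whose leading asymptotics come from Lemma~\ref{lemma:poly} and Stirling's formula.

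For (\ref{eq:main_complex}) and its $\Re$-analogue, the point is that $S_n^{\C,\C}(z,z)$ agrees with $K_n(z,z)$ from the complex ensemble up to lower-order corrections. Using the standard bound $s\,e^{s^2}\erfc(s)\le 1/\sqrt\pi$ for $s>0$, together with $s\,e^{s^2}\erfc(s)\to 1/\sqrt\pi$ as $s\to\infty$ (see (\ref{ccc})), the substitution $s=\sqrt{2n}\,|\Im z|$ in (\ref{realkernel_c}) gives
\[
S_n^{\C,\C}(z,z)\le \frac{n}{\pi}\, e^{-n|z|^2}\,\mathfrak{e}_{n-2}(n|z|^2),
\]
with equality up to a factor $1+O(n^{-1})$ once $|\Im z|\ge n^{-1/4}$. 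The upper bounds then follow exactly as in (\ref{upper})--(\ref{upper_bound}) and (\ref{rightmost_bound}) (replacing $n-1$ by $n-2$ in the polynomial is harmless in Lemma~\ref{lemma:poly}), and the matching lower bounds follow as in (\ref{K_bound_lower})--(\ref{lower}) and (\ref{lower_rt}) upon restricting the first-moment integral to the thin region $\{|z|\ge t\}\cap\{n^{-1/4}\le|\Im z|\le 2n^{-1/4}\}$, respectively $\{\Re z\ge t\}\cap\{n^{-1/4}\le|\Im z|\le 2n^{-1/4}\}$: there $S_n^{\C,\C}(z,z)=\tfrac{n}{\pi}e^{-n|z|^2}\mathfrak{e}_{n-2}(n|z|^2)(1+o(1))$, the shrinking strip costs only polynomial factors in $n$, and $|z|^2\to t^2$ at the dominating point, so the rate is exactly $t^2-2\log t-1$.

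For (\ref{eq:main_real}), fix $t>1$ and analyse $S_n^{\R,\R}(x,x)$ for $x>1$. By Lemma~\ref{lemma:poly} the first term of (\ref{realkernel_r}) equals $\tfrac{x^2}{2\pi(x^2-1)}e^{-n(x^2-2\log x-1)}(1+O(n^{-1/2}))$, which decays at the strictly faster rate $n(x^2-2\log x-1)$ and is therefore negligible. For the second term, since $nx^2/2$ lies a distance $\sim n(x^2-1)/2\gg\sqrt n$ to the right of the mean $\tfrac{n-1}{2}$ of the $\Gamma(\tfrac{n-1}{2})$ law, concentration gives $\int_0^{nx^2/2}u^{(n-3)/2}e^{-u}\,\dd u=\Gamma(\tfrac{n-1}{2})(1+O(e^{-cn}))$; feeding in Stirling's formula for $\Gamma(n/2)$ and simplifying then yields
\[
S_n^{\R,\R}(x,x)=\frac{\sqrt n}{2\sqrt\pi\, x}\, e^{-\frac n2\,(x^2-2\log x-1)}\bigl(1+o(1)\bigr),
\]
uniformly on compact subsets of $(1,\infty)$. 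Integrating over $x\ge t$ by Laplace's method (the exponent $x^2-2\log x-1$ is increasing on $(1,\infty)$) gives $\E^{\mathrm{Gin}(\R)}[\#\{i:\sigma_i\in\R,\,\sigma_i\ge t\}]=\tfrac{1}{2\sqrt{\pi n}(t^2-1)}e^{-\frac n2(t^2-2\log t-1)}(1+o(1))$, and the sandwich $\tfrac1n\E N\le\P(N\ge1)\le\E N$ yields (\ref{eq:main_real}). The claims for $\max_{\sigma_i\in\C\setminus\R}\Re\sigma_i$ and $\max_{\sigma_i\in\R}|\sigma_i|$ follow in the same manner; in the last case $S_n^{\R,\R}(x,x)$ depends only on $|x|$, so the first moment simply doubles, which does not affect the rate.

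I expect the main obstacle to be the asymptotic analysis of $S_n^{\R,\R}(x,x)$: one has to control the incomplete-Gamma integral in the second term of (\ref{realkernel_r}) for $x>1$ and push the Stirling bookkeeping far enough to pin down both the prefactor $\tfrac{\sqrt n}{2\sqrt\pi x}$ and, crucially, the fact that its exponential rate $\tfrac n2(x^2-2\log x-1)$ is exactly half that of the first term, which is precisely what makes the real eigenvalues dominate the large deviation. Once this asymptotic is in hand, everything else is essentially a transcription of the complex-case argument of Proposition~\ref{prop_gin}.
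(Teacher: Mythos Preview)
Your proposal is correct and takes essentially the same approach as the paper: the first-moment sandwich $\tfrac1n\E N\le\P(N\ge1)\le\E N$, the comparison of $S_n^{\C,\C}$ with the complex Ginibre kernel via the $\erfc$ asymptotics, and the analysis of $S_n^{\R,\R}$ using Lemma~\ref{lemma:poly} for the first term and Stirling for the second to show the half-rate $\tfrac n2(x^2-2\log x-1)$ dominates. Your lower bound for the complex eigenvalues, restricting to the strip $n^{-1/4}\le|\Im z|\le2n^{-1/4}$ where the $\erfc$ factor is well controlled, is slightly more explicit than the paper's, which packages the same information into the two-sided asymptotic $S_n^{\C,\C}(z,z)=e^{1-|z|^2}K_{n-1}(z,z)\bigl(1+O(\min\{1,(n(\Im z)^2)^{-1}\}+n^{-1/2})\bigr)$ and then defers to the complex-case computations.
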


\begin{proof}[Proof of Proposition \ref{prop_real}]
The proof is similar to that of Proposition \ref{prop_gin}. We first consider the complex eigenvalues in (\ref{eq:main_complex}). From Lemma \ref{lemma:poly} note that
\begin{align}\label{kernel_real}
	S^{\C,\C}_n(z,z)&=\sqrt{2n\pi} e^{2n (\Im z)^2} |\Im z| \erfc(\sqrt{2n}\abs{\Im z}) K_{n-1}(z,z) e^{1-|z|^2}\Big( 1+O(n^{-1/2})\Big)\nonumber\\
	&=e^{1-|z|^2} K_{n-1}(z,z)\left(1 + O\left(\min\Big\{1,\frac{1}{n(\Im z)^2}\Big\}+\frac{1}{\sqrt{n}}\right)\right),
\end{align}\nc
with $K_{n}$ given by (\ref{complex}) in the complex case, where we also used the asymptotic estimate $\erfc(x)=e^{-x^2}/(\sqrt{\pi}x)(1+O(x^{-2}))$ and the bound $\erfc(x)\le e^{-x^2}/(\sqrt{\pi}x)$. This, together with the arguments as in (\ref{upper})--(\ref{lower_rt}), implies (\ref{eq:main_complex}) and its analog for $\max_{\sigma_i \in \C \setminus \R} \Re\sigma_i$. 

We next focus on the real eigenvalues in (\ref{eq:main_real}). Since the one point correlation function for the real eigenvalue process in (\ref{realkernel_r}) is symmetric, the statement for $\max_{\si_i\in\R}|\si_i|$ follows immediately from the one-sided estimate in (\ref{eq:main_real}). Note that the first term on the right side of (\ref{realkernel_r}) is given by 
\begin{align}\label{term1}
	\sqrt{\frac{n}{2\pi}} e^{-n x^2} \mathfrak{e}_{n-2}(nx^2)=\sqrt{\frac{\pi}{2n}} K_{n-1}(x,x) e^{1-x^2} \Big( 1+O(n^{-1/2})\Big).
\end{align}
Using that $\Gamma(z)\Gamma\big(z+1/2\big)=2^{1-2z}\sqrt{\pi} \Gamma(2z)$, we write the second term on the right side of (\ref{realkernel_r})
\begin{align}\label{star_term}
\frac{n^{\frac{n}{2}} x^{n-1} e^{-\frac{n}{2}x^2}}{2^{\frac{n}{2}} \Gamma\big(\frac{n}{2}\big) \Gamma\big(\frac{n-1}{2}\big) } \int_0^{\frac{nx^2}{2}}  u^{\frac{n-3}{2}} e^{-u} \dd u=\frac{2^{\frac{n}{2}-2} n^{\frac{n}{2}}  x^{n-1} e^{-\frac{n}{2}x^2}}{\sqrt{\pi} (n-2)!} \Big(\int_0^{\infty} -\int_{\frac{n}{2}x^2}^\infty \Big) u^{\frac{n-3}{2}}e^{-u} \dd u.
\end{align}
Note that the function $u^{\frac{n-3}{2}}e^{-\frac{n-3}{n}u}$ is decreasing in the regime $u\geq nx^2/2$ with $x\geq 1$. Thus the last integral over $[nx^2/2,\infty)$ in (\ref{star_term}) is bounded from above by
$$\int_{\frac{n}{2}x^2}^\infty  u^{\frac{n-3}{2}}e^{-u} \dd u \leq \Big( \frac{nx^2}{2}\Big)^{\frac{n-3}{2}} e^{-\frac{(n-3)x^2}{2}} \int_{\frac{n}{2}x^2}^\infty e^{-\frac{3}{n}u} \dd u \leq Cn\Big( \frac{nx^2}{2}\Big)^{\frac{n-3}{2}} e^{-\frac{(n-3)x^2}{2}},$$
which is much smaller than the whole integral over $[0,\infty)$. Thus by Stirling's formula, we have
\begin{align}\label{term2}
\frac{n^{\frac{n}{2}} x^{n-1} e^{-\frac{n}{2}x^2}}{2^{\frac{n}{2}} \Gamma\big(\frac{n}{2}\big) \Gamma\big(\frac{n-1}{2}\big) } \int_0^{\frac{nx^2}{2}}  u^{\frac{n-3}{2}} e^{-u} \dd u &= \frac{2^{\frac{n}{2}-2} n^{\frac{n}{2}}  x^{n-1} e^{-\frac{n}{2}x^2}}{\sqrt{\pi} (n-2)!}  \Gamma\big(\frac{n-1}{2} \big) (1+o(1))\nonumber\\
 &\sim  \frac1{x} e^{-\frac{n}{2} \big(x^2-2\log x-1\big)+\frac12 \log n}.
\end{align}
Combining (\ref{term1}), (\ref{term2}) with (\ref{realkernel_r}), we have
\begin{align}\label{S_bound}
	S_n^{\R,\R}(x,x)
	\sim &\sqrt{\frac{\pi}{2n}} K_{n-1}(x,x)e^{1-x^2} + \frac{1}{x}e^{-\frac{n}{2} \big(x^2-2\log x-1\big)+\frac12 \log n} .
\end{align}
 Therefore, we obtain an upper bound similarly to (\ref{K_bound}), \ie
\begin{align}
	\E \big[\#\{\sigma_i\in \R: \sigma_i \geq t\}\big]&=\int_{x>t} S^{\R,\R}_n(x,x) \dd x \nonumber\\
 &\leq C \int_t^\8  \frac{1}{x^2-1}e^{-n(x^2-2\log x -1)} \dd x +
	C\sqrt{n}\int_{t}^\8 \frac1x e^{-\frac{n}{2} \big(x^2-2\log x-1\big)} \dd x
	 \nonumber\\
	&\leq  C_t e^{-\frac{n}{2} \big(t^2-2\log t-1\big)-\frac12\log n},\label{eq:ubreal}
\end{align} 
where we also used (\ref{eq:est3}) together with the following estimate
\begin{align}\label{eq:est4}
    \frac{e^{-\frac{n}{2}(t^2-2\log t)}}{n(t^2-1)}\Big(1-\frac{2t^2}{n(t^2-1)^2} \Big)\le \int_t^\8 \frac1x e^{-\frac{n}{2} \big(x^2-2\log x\big)} \dd x \le \frac{e^{-\frac{n}{2}(t^2-2\log t)}}{n(t^2-1)}.
    \end{align}
From here, we also find a lower bound in the form of \eqref{eq:ubreal}. \nc 
Using that
	$$\frac{1}{n}\E \big[\#\{\sigma_i\in \R: \sigma_i \geq t\}\big] \leq \P\Big(\max_{\sigma_i \in \R} \sigma_i\ge t\Big) \leq \E \big[\#\{\sigma_i\in \R: \sigma_i \geq t\}\big],$$
we obtain the desired upper and lower bounds to prove (\ref{eq:main_real}) and  hence finish the proof of  Proposition \ref{prop_real}.
\end{proof}

\begin{proof}[Proof of Theorem \ref{thm:main} for $\beta=1$]
    As in the complex case, from the LDP for the empirical spectral measures in the real case we know
    \[
    \limsup_{n\to \8} \frac{1}{n} \log \P(\max_{i=1}^n \{|\sigma_i|\} \le t)\le \limsup_{n\to \8}\frac{1}{n} \log \P\Big(\max_{i=1}^n \Re \si_i \le t\Big) =-\8, \qquad t<1.
    \]
    The asserted LDP for $\rho_n$ follows from Proposition \ref{prop_real} since 
        \begin{align*}
    	\P\Big(\max_{\si_i\in \R} \si_i \ge t\Big) \le  \P(\max_{i=1}^n \{|\sigma_i|\} \ge t) \leq \P\Big(\max_{\si_i\in \R} \si_i \ge t\Big)+\P\Big(\max_{\si_i\in \C \setminus \R} |\si_i| \ge t\Big).
    \end{align*}
  For the rightmost eigenvalue, we observe that 
    \begin{align*}
         \P\Big(\max_{\si_i\in \R} \si_i \ge t\Big) \le \P\Big(\max_{i=1}^n \Re \si_i \ge t\Big) \le  \P(\max_{i=1}^n \{|\sigma_i|\}\ge t).
    \end{align*}
    The proof is completed using Proposition \ref{prop_real} again.
\end{proof}

\section{Proofs of Theorems \ref{MDP} and \ref{SDP}}

\subsection{Proof of Theorem \ref{MDP}}
We first consider the complex Ginibre ensemble. Similar to the arguments used in (\ref{upper})--(\ref{upper_bound}), for any $ \sqrt{\frac{\gamma_n}{4 n}} \leq  d_n \ll 1$, we have
\begin{align}\label{integral_K}
	\P^{\mathrm{Gin}(\C)}\Big( \max_{i=1}^n |\sigma_i| \geq 1+d_n  \Big) &\leq  \E^{\mathrm{Gin}(\C)} \big[\#\{1\leq i \leq n: |\sigma_i| \geq 1+d_n\}\big]\nonumber\\
	&=  \int_{|z| \geq 1+d_n }  K_n(z,z) \dd^2  z \nonumber\\
	&=\int_{|z|\geq 1+d_n} \frac{n}{\pi \sqrt{2 \pi n} } \frac{1}{|z|^2-1} e^{-n \big(|z|^2-2\log |z|-1\big)} \dd^2 z \Big(1+O(n^{-1/2})\Big)\nonumber\\
	&\lesssim 
  \frac{1}{\sqrt{n} (d_n)^2 }e^{-n[(d_n)^2 +2d_n-2\log(1+d_n)]}
\end{align}
where we used the upper estimate of \eqref{eq:est2} in the last step. 
Using a simple Taylor expansion $\log(1+x)= x-\frac{x^2}{2}+O(x^3)$, we obtain that
\begin{align}\label{spec}
	\P^{\mathrm{Gin}(\C)}\Big( \max_{i=1}^n |\sigma_i| \geq 1+d_n  \Big) \lesssim & \frac{1}{\sqrt{n} (d_n)^2} e^{-2n(d_n)^2(1-O(d_n))}.
\end{align}\nc
This proves (\ref{mdp_result_1}) for the complex case~($\beta=2$).

The rightmost eigenvalue (\ref{mdp_result_2}) can be estimated similarly. For any $ \sqrt{\frac{\gamma_n'}{4 n}} \leq  d_n \ll 1$, we have
\begin{align}\label{integral_K_rt}
	\P^{\mathrm{Gin}(\C)}\big( \max_{i=1}^n \Re \sigma_i &\geq 1+d_n  \big)
	\leq \int_\R \int_{x \geq 1+d_n}  K_n(x+\ii y, x+\ii y) \dd x \dd y\nonumber\\
	=&  \int_{\R} \int_{x\geq 1+d_n} \frac{n}{\pi \sqrt{2 \pi n} } \frac{1}{x^2+y^2-1} e^{-n \big(x^2+y^2-\log(x^2+y^2)-1\big)} \dd x \dd y \Big(1+O(n^{-1/2})\Big)\nonumber\\
	\lesssim &\sqrt{n} \int_{|y| \leq 10\sqrt{d_n}} \int^{10}_{x\geq 1+d_n}  \frac{1}{x^2-1} e^{-n \big(x^2+y^2-\log(x^2+y^2)-1\big)} \dd x \dd y\nonumber\\
	& \qquad +O\Big(\frac{1}{\sqrt{n} (d_n)^2} e^{-2n(10d_n)^2(1-O(d_n))}\Big),
\end{align}
where the integral with $y \geq 10\sqrt{d_n}$ or $x\geq 10$ is bounded as in (\ref{integral_K})--(\ref{spec}) using that $|z|^2=x^2+y^2\geq 1+100d_n$. 
By a simple Taylor expansion for $y\leq 10\sqrt{d_n}$ and $ 1+d_n \leq x\leq 10 $, we have
\begin{align}
	x^2+y^2-\log(x^2+y^2)-1&= x^2+y^2-2\log x-\log\Big(1+\frac{y^2}{x^2}\Big)-1\nonumber\\
	&\geq x^2-2\log x-1+Cy^2(x-1).
\end{align}
Therefore we obtain from (\ref{integral_K_rt}) that
\begin{align}\label{rtmost}
	\P^{\mathrm{Gin}(\C)}\big( \max_{i=1}^n \Re \sigma_i \geq 1+d_n  \big)
	&\lesssim  \sqrt{n}  \int_{x\geq 1+d_n}  \frac{1}{x^2-1} e^{-n \big(x^2-2\log x-1\big)} \dd x \left(\int_{|y| \leq 10\sqrt{d_n}}e^{-C ny^2 d_n}\dd y\right)\nonumber\\
	&\qquad +O\Big(\frac{1}{\sqrt{n} (d_n)^2} e^{-2n(10d_n)^2(1-O(d_n))}\Big)\nonumber\\
	&\lesssim  \frac{1}{n (d_n)^{5/2}} e^{-2n(d_n)^2(1-O(d_n))},
\end{align}
where we also used the upper estimate in \eqref{eq:est3}.
This proves (\ref{mdp_result_2}) in the complex case. 

We next consider the real case~($\beta=1$). Recall the one-point correlation function for the complex eigenvalue process in (\ref{kernel_real}). The same estimates as in (\ref{spec}) and (\ref{rtmost}) also hold true for the complex eigenvalues of the real Ginibre ensemble. We hence focus on the real eigenvalues. 
Similarly to \eqref{eq:ubreal}, we obtain that
\begin{align}\label{integral_K_real}
\P^{\mathrm{Gin}(\R)}\big( \max_{\sigma_i \in \R}
 \sigma_i \geq 1+d_n  \big) &\leq \E \big[\#\{\sigma_i\in \R: \sigma_i \geq 1+d_n\}\big]\nonumber\\
 &\lesssim  \frac{1}{nd_n^2}e^{-n[(d_n)^2+2d_n-2\log(1+d_n)]}+ \frac{1}{\sqrt{n} d_n} e^{-\frac{n}{2}[(d_n)^2+2d_n-2 \log(1+d_n)]} \notag\\
 &\le  \frac{1}{nd_n^2}e^{-2n(d_n)^2(1-O(d_n))}+ \frac{1}{\sqrt{n}d_n} e^{-n(d_n)^2(1-O(d_n))},
\end{align}
where we also used the estimates in \eqref{eq:est3} and \eqref{eq:est4}.
Since the second term on the right side of \eqref{integral_K_real} is dominant, this proves (\ref{result_real}) and hence (\ref{mdp_result_2}) in the real case. One could obtain a similar upper bound for $\max_{\sigma_i \in \R}
|\sigma_i|$ as in (\ref{result_real}) and hence prove (\ref{mdp_result_1}) in the real case. 

Finally, we prove the last statement (\ref{MDP_formal}). Using similar arguments as in (\ref{lower}) and (\ref{lower_rt}), we obtain the following (non-optimal) lower bound~(\cf (\ref{integral_K}), (\ref{rtmost}) and (\ref{integral_K_real}), respectively)
\begin{align}
	&\P^{\mathrm{Gin}(\C)}\Big( \max_{i=1}^n |\sigma_i| \geq 1+d_n  \Big) \geq \frac{1}{n} \E^{\mathrm{Gin}(\C)} \big[\#\{1\leq i \leq n: |\sigma_i| \geq 1+d_n\}\big]\nonumber\\
	&\qquad \qquad\qquad\qquad\qquad\qquad\gtrsim  \frac{1}{n^{3/2} (d_n)^2} e^{-2n(d_n)^2(1-O(d_n))},\label{integral_K_lower}\\
	&\P^{\mathrm{Gin}(\C)}\big( \max_{i=1}^n \Re \sigma_i \geq 1+d_n  \big)\gtrsim  \frac{1}{n^2 (d_n)^{5/2}} e^{-2n(d_n)^2(1-O(d_n))},\label{rtmost_lower}\\
	&\P^{\mathrm{Gin}(\R)}\big( \max_{\sigma_i \in \R}
	\sigma_i \geq 1+d_n  \big) \gtrsim   \frac{1}{n^{3/2}d_n} e^{-n(d_n)^2(1-O(d_n))}.\label{integral_K_real_lower}
\end{align}
Hence for any $\sqrt{\log n/n} \ll d_n \ll 1$, using that $n(d_n)^2 \gg \log n$, the last statement (\ref{MDP_formal}) follows directly from combining the upper estimates (\ref{spec}), (\ref{rtmost})--(\ref{integral_K_real}) with the lower estimates (\ref{integral_K_lower})--(\ref{integral_K_real_lower}). We hence finish the proof of Theorem \ref{MDP}.

\subsection{Proof of Theorem \ref{SDP}}
The estimates in (\ref{small_complex1})--(\ref{gumbel_tail}) for the Ginibre ensembles follow directly from (\ref{mdp_result_1})--(\ref{mdp_result_2}), respectively. 
In particular, along the proof we indeed obtain the following Ginibre estimates, \ie for $s,t>1$,
\begin{align}
	&\E^{\mathrm{Gin}} \Big[\#\big\{ |\sigma_i| \geq 1+s\sqrt{\frac{\gamma_n}{4 n}}\big\}\Big] \lesssim  \frac1{s^2}\Big(\frac{\log n}{\sqrt{n}}\Big)^{s^2-1}+ \frac{\big(\log n\big)^{\frac12(s^2-1)}}{s n^{\frac{s^2}{4}}}  \one_{\beta=1},\label{gin_number}\\
	&\E^{\mathrm{Gin}} \Big[\#\big\{ \Re \sigma_i \geq 1+t\sqrt{\frac{\gamma'_n}{4 n}}\big\}\Big]\lesssim \frac{1}{t^{\frac{5}{2}}}\Big(\frac{(\log n)^{\frac{5}{4}}}{n^{\frac{1}{4}}}\Big)^{t^2-1}+\frac{(\log n)^{\frac{5}{8}t^2-\frac{1}{2}}}{t n^{\frac{t^2}{8}}}\one_{\beta=1},\label{gin_number_rt}
\end{align}
and for any $1\ll s_n \ll \log n$, 
\begin{align}\label{gin_number_asy}
&\E^{\mathrm{Gin}} \Big[\#\big\{ |\sigma_i| \geq 1+\sqrt{\frac{\gamma_n}{4 n}}+\frac{s_n}{ 4n \gamma_n}\big\} \Big] \lesssim e^{-s_n},
\end{align}
together with the same estimate for $\Re \sigma_i$ with $\gamma_n'$ in (\ref{gamma}), where $\E^{\mathrm{Gin}}$ denotes the expectation for the Ginibre ensembles. In the following lemma, we extend the above Ginibre results to any i.i.d.~matrices, from which Theorem \ref{SDP} follows.

\begin{lemma}\label{lemma_uni}
	Under the same conditions as in Theorem \ref{SDP}, then for any $s,t>1$, there exist $C_s, C_t>0$ such that
	\begin{align}
		&\E \Big[\#\big\{|\sigma_i| \geq 1+s\sqrt{\frac{\gamma_n}{4 n}}\big\}\Big]  \lesssim (\log n)^{C_s} \Big(  n^{-\frac{s^2-1}{2}}+n^{-\frac{s^2}{4}}\one_{\beta=1} \Big),\label{spec_uni}\\
		 & \E \Big[\#\big\{\Re \sigma_i \geq 1+t\sqrt{\frac{\gamma'_n}{4 n}}\big\}\Big]  \lesssim  (\log n)^{C_t}   \Big(  n^{-\frac{t^2-1}{4}}+n^{-\frac{t^2}{8}}\one_{\beta=1} \Big).\label{rtmost_uni}
	\end{align}
 Moreover, the Ginibre estimates as in (\ref{gin_number_asy}) also hold true for i.i.d.~matrices.
\end{lemma}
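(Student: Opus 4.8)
The plan is to run the Hermitization and Green's function comparison argument of \cite{gumbel,maxRe}, transferring the explicit Ginibre bounds \eqref{gin_number}--\eqref{gin_number_asy} to the general i.i.d.\ case. I describe it for the spectral-radius count \eqref{spec_uni}; the estimate \eqref{rtmost_uni} and the tail bound for i.i.d.\ matrices in \eqref{gin_number_asy} are entirely analogous, upon replacing the annulus by a thin strip around $\{\Re z = 1+t\sqrt{\gamma_n'/(4n)}\}$, respectively shifting the threshold by $s_n/\sqrt{4n\gamma_n}$. First I would discard atypical realizations: since all moments of $\|X\|$ are finite, one has $\|X\|\le 2$ with overwhelming probability, while $\#\{|\sigma_i|\ge 1+d_n\}\le n$ deterministically, so $\E[\#\{|\sigma_i|\ge 1+d_n\}\one_{\|X\|>2}]\le n\,\P(\|X\|>2)=O(n^{-D})$ for every $D>0$. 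It therefore suffices to estimate the expectation on $\{\|X\|\le 2\}$, where every eigenvalue counted lies in the fixed annulus $\{1+d_n\le|z|\le 2\}$. It is essential here that $d_n=s\sqrt{\gamma_n/(4n)}\asymp\sqrt{\log n/n}$, so the Ginibre values \eqref{gin_number}--\eqref{gin_number_asy} are only polynomially small in $n$; this is the regime in which local-law methods are quantitatively sharp enough to be useful, which is why universality is claimed only at this threshold and not in the large-deviation regime of Theorem \ref{thm:main}.

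The core step is to compare the mean eigenvalue density. Writing $\rho_n$ for the (distributional) mean density of the eigenvalues of $X$ and using $\Delta_z\log|\det(X-z)|=2\pi\sum_i\delta_{\sigma_i}(z)$, one has $\E\#\{|\sigma_i|\ge 1+d_n\}=\int_{|z|\ge 1+d_n}\rho_n(z)\,\dd^2 z+O(n^{-D})$ with $\rho_n=\frac1{2\pi}\Delta_z\,\E\log|\det(X-z)|$, so it is enough to show $\rho_n(z)\lesssim n^{o(1)}K_n(z,z)+n^{-D}$ uniformly for $z$ in the annulus, since integrating $K_n(z,z)$ there reproduces \eqref{gin_number} up to the $n^{o(1)}$ loss. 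To control $\E\log|\det(X-z)|$ I would Hermitize via $H^z=\big(\begin{smallmatrix}0&X-z\\(X-z)^*&0\end{smallmatrix}\big)$, $G^z(w)=(H^z-w)^{-1}$, use the exact identity relating $\log|\det(X-z)|$ to $\int_0^{T}\Im\Tr G^z(\ii\eta)\,\dd\eta$ plus a benign large-$\eta$ boundary term, and split this integral at $\eta_\ast=n^{-1+\epsilon_0}$. For $\eta\in[\eta_\ast,T]$ ($T$ a large constant) I insert the local law $\frac1n\Tr G^z(\ii\eta)=m^z(\ii\eta)+(\text{fluctuation error})$, whose deterministic self-consistent profile $m^z$ does not depend on the entry distribution beyond its first two moments, and hence contributes exactly the Ginibre main term; for $\eta\le\eta_\ast$ one has $\int_0^{\eta_\ast}\Im\Tr G^z(\ii\eta)\,\dd\eta=\sum_j\log\big(1+\eta_\ast^2/s_j(X-z)^2\big)$, which is $O(n^{-c})$ on the event $\{\lambda_1(X-z)\ge n^{-C_0}\}$ with $C_0,\epsilon_0$ small, the complementary event having probability $O(n^{-D})$ by the Rudelson--Vershynin type smallest-singular-value bound that the density hypothesis \eqref{eq:hmb} delivers (on that event one uses the deterministic bound $n$). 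The delicate point is that the remaining fluctuation error must be shown to be no larger than the Ginibre value: for $|z|=1+d_n$ the spectral parameter lies outside the droplet, $\Im m^z(\ii\eta)$ is gapped below the edge scale $\asymp d_n$, and this forces both the self-consistent integral and its fluctuation to inherit the Gaussian decay $e^{-cnd_n^2}$. Obtaining an \emph{edge} local law for $G^z$, uniformly in $z$ near the unit circle, whose error genuinely retains this decay factor is the main obstacle of the argument.

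Finally, for $\beta=1$ the extra $\one_{\beta=1}$ contribution in \eqref{spec_uni}--\eqref{rtmost_uni} comes from the real eigenvalues, which appear as a one-dimensional singular part of $\Delta_z\,\E\log|\det(X-z)|$ supported on $\R$; extracting it amounts to computing the jump of $\partial_y\,\E\log|\det(X-x-\ii y)|$ across $y=0$, a Kac--Rice type quantity, and then showing that this mean density of real eigenvalues near the spectral edge agrees, up to a $n^{o(1)}$ factor, with that of the real Ginibre ensemble in \eqref{realkernel_r}, whose edge analysis (already carried out in the proof of Theorem \ref{MDP}) yields the factors $n^{-s^2/4}$ and $n^{-t^2/8}$. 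Establishing this edge universality for the real eigenvalues of a real i.i.d.\ matrix, following the Hermitization/Kac--Rice argument of \cite{maxRe}, is the second delicate ingredient and, together with the sharp edge local law above, is what the whole proof rests on. Once $\E\log|\det(X-z)|$ (resp.\ its normal-derivative jump on $\R$) is pinned down to the required polynomial precision, the counts \eqref{spec_uni}--\eqref{rtmost_uni} and the i.i.d.\ analogue of \eqref{gin_number_asy} follow by integrating against $K_n(z,z)$ (resp.\ $S_n^{\R,\R}(x,x)$) exactly as in the Ginibre computations \eqref{gin_number}--\eqref{gin_number_asy}, which also shows the $\lesssim e^{-s_n}$ tail bound matches the exponential right tail \eqref{tail_complex}.
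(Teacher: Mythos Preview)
Your high-level structure (Girko/Hermitization, splitting the $\eta$-integral, smallest-singular-value control near $\eta=0$) is broadly aligned with the paper and with \cite{gumbel,maxRe}. However, the mechanism you propose for the actual comparison has a genuine gap.

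The problematic step is the claim that the deterministic profile $m^z$ ``contributes exactly the Ginibre main term''. For $|z|>1$ the spectrum of $H^z$ has a gap at zero, so $\frac{1}{2\pi}\Delta_z\int_0^T\Im\,2n\,m^z(\ii\eta)\,\dd\eta$ vanishes there: the circular-law density is identically zero outside the unit disk. The Ginibre one-point function $K_n(z,z)\sim n\,e^{-n(|z|^2-2\log|z|-1)}$ for $|z|>1$ is entirely a finite-$n$ correction, not captured by any leading-order local law. Matching i.i.d.\ to Ginibre via ``both agree with the deterministic $m^z$'' is therefore vacuous at the required precision: you are trying to resolve a signal of size $n^{-(s^2-1)/2}$ sitting underneath a local-law error that is only $n^{\xi}/(n\eta)$. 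Relatedly, your assertion that the fluctuation error automatically ``inherits the Gaussian decay $e^{-cn d_n^2}$'' from the gap in $\Im m^z$ is not something standard local-law proofs provide, and the Rudelson--Vershynin bound you invoke for $\lambda_1^z$ lacks the crucial factor $e^{-n(|z|^2-1)^2/2}$ that the precise tail estimate \cite[Proposition~3.9]{gumbel} carries.

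The paper avoids this by never comparing either expectation to the deterministic profile. It smooths $\one_{\Omega_s}$ by test functions $f_s^\pm$, forms a regularized linear statistic $\widehat{\mathcal L}_f$ (including a cutoff $q_z$ that encodes the event $\lambda_1^z\gtrsim n^{-3/4-\epsilon}$), and then runs an Ornstein--Uhlenbeck flow interpolating $X$ and the Ginibre matrix. Differentiating $\E_t[\widehat{\mathcal L}_f]$ in $t$ and expanding in cumulants yields $\big|\E[\widehat{\mathcal L}_f]-\E^{\mathrm{Gin}}[\widehat{\mathcal L}_f]\big|=O\big(n^{-c\epsilon}\,n^{-(s^2-1)/2}\big)$, the extra smallness $n^{-(s^2-1)/2}$ entering precisely through the singular-value tail $\P(\lambda_1^z\le n^{-3/4-\epsilon})\lesssim e^{-n(|z|^2-1)^2/2}$ for $|z|\ge 1+s\sqrt{\gamma_n/(4n)}$. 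The Ginibre value is then taken directly from \eqref{gin_number}. The real case is handled by the same flow/cumulant comparison with the modifications of \cite[Section~2.4]{maxRe}, not by extracting a Kac--Rice jump of $\partial_y\E\log|\det(X-x-\ii y)|$ across $\R$ as you propose. In short, the missing idea is a \emph{dynamical} Lindeberg-type comparison between the two ensembles; comparing each separately to the local-law deterministic limit cannot resolve the exponentially small quantities at stake.
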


%
%

We follow the same strategy introduced in \cite{gumbel, maxRe} to prove Lemma \ref{lemma_uni}. The proof is indeed much easier than \cite{gumbel, maxRe} since we only need to estimate the expected number of eigenvalues in a given regime without considering its fluctuations.  To simplify the arguments, we will focus on the complex case, and the real case can be handled similarly with several modifications~(see more explanations in \cite[Section 2.4]{maxRe}). 

In the proof, we will often use the concept of ``with very high probability'' for an $n$-dependent event, meaning that for any fixed $D>0$ the probability of the event is bigger than $1-n^{-D}$ if $n\ge n_0(D)$.  

\begin{proof}[Proof of Lemma \ref{lemma_uni} in the complex case]
We will only prove (\ref{spec_uni}) for the spectral radius, and the proofs of (\ref{rtmost_uni}) for the rightmost eigenvalue and the estimate as in (\ref{gin_number_asy}) are essentially the same with minor modifications, which will be explained at the end of the proof.

\smallskip

Following \cite{gumbel, maxRe}, we will first bound the expected number of eigenvalues in the domain $\big\{|z|\geq 1+s\sqrt{\frac{\gamma_n}{4n}}\big\}$ from below and above by a regularized quantity as stated in (\ref{relation}) below. The derivation of (\ref{relation}) is quite standard, but we still include it for the completeness of the proof.  Fix any sufficiently small $\tau >0$, we truncate the domain $\big\{|z|\geq 1+s\sqrt{\frac{\gamma_n}{4n}}\big\}$ by
$$\Omega_{s}:=\Big\{z\in \C: 1+s\sqrt{\frac{\gamma_n}{4n}} \leq |z| \leq 1+n^{-1/2+\tau}\Big\}, \quad s\geq 1.$$ 
Note that from \cite[Theorem 2.1]{AEK19}, there is no eigenvalue beyond $|z| \geq 1+n^{-1/2+\tau}$ with very high probability. It then suffices to count the expected number of eigenvalues in the annulus $\Omega_s$.
To achieve this, we choose two test functions $f^{\pm}_{s} \in C^{2}_c(\C)$ to bound $\one_{\Omega_{s}}$ from below and above  
\begin{align}\label{test}
	\one_{\Omega^-_{s}}(z) \leq f^{-}_{s}(z) \leq \one_{\Omega_{s}}(z) \leq f^{+}_{s}(z) \leq \one_{\Omega^+_{s}}(z),
\end{align}
with $\Omega^{\pm}_{s}:=\{z\in \C: 1+s\sqrt{\frac{\gamma_n}{4n}}\mp\frac{1}{\sqrt{n}}\le|z| \leq 1+n^{-\frac{1}{2}+\tau}\}$ such that 
$$ \max_{\alpha_1,\alpha_2 \in \N, \alpha_1+\alpha_2\le2} \left\{ n^{-\frac{\alpha_1+\alpha_2}{2}} \max_{z\in \C} \Big|\partial^{\alpha_1}_{z} \partial^{\alpha_2}_{\bar z} f_s^{\pm}(z)\Big| \right\}=O(1).$$
It is straightforward to check with the inequalities in (\ref{test}) that
\begin{align}\label{relation}
\E\Big[\# \big\{\sigma_i \in \Omega^-_{s}\big\} \Big] &\leq \E\Big[\sum_{i=1}^n f^{-}_{s}(\sigma_i)\Big] \leq \E\Big[\# \big\{\sigma_i \in \Omega_{s}\big\} \Big] \nonumber\\
& \le\E\Big[\sum_{i=1}^n f^{+}_{s}(\sigma_i)\Big] \leq \E\Big[\# \big\{\sigma_i \in \Omega^+_{s}\big\} \Big].
\end{align}
Note that the inequalities in (\ref{relation}) are also true when $X$ is a Ginibre matrix. With our choices of test functions in (\ref{test}), we know that $\E^{\mathrm{Gin}(\C)}\Big[\sum_{i=1}^n f^{\pm}_{s}(\sigma_i)\Big]$ have the same upper bound as $\E^{\mathrm{Gin}}\big[\# \big\{\sigma_i \in \Omega_{s}\big\} \big]$ in (\ref{gin_number})--(\ref{gin_number_rt}), \ie
\begin{align}\label{number_complex_f}
	\E^{\mathrm{Gin}(\C)}\Big[\sum_{i=1}^n f^{\pm}_{s}(\sigma_i)\Big] \lesssim (\log n)^{C_s} n^{-\frac{s^2-1}{2}}, \qquad s\geq 1.
\end{align}
To extend the above Ginibre estimates in (\ref{number_complex_f})  to general i.i.d.~matrices, it then suffices to establish the following comparison principle: for any large $D>0$,
\begin{align}\label{comparison}
\left|\E\Big[\sum_{i=1}^n f^{\pm}_{s}(\sigma_i)\Big]-\E^{\mathrm{Gin}(\C)}\Big[\sum_{i=1}^n f^{\pm}_{s}(\sigma_i)\Big]  \right|=O\Big(n^{-c\epsilon}  n^{-\frac{s^2-1}{2}}+n^{-D}\Big).
\end{align}
Once we have proved (\ref{comparison}), combining this with the Ginibre estimate in (\ref{number_complex_f}) and using the inequalities (\ref{test})--(\ref{relation}), we complete the proof of (\ref{spec_uni}) in the complex case, \ie  
$$\E\big[\# \big\{\sigma_i \in \Omega_{s}\big\} \big] \lesssim (\log n)^{C_s} n^{-\frac{s^2-1}{2}}, \qquad s\geq 1.$$

\smallskip

In the remaining part, we aim to prove the comparison principle (\ref{comparison}) using the strategy introduced in the proof of \cite[Theorem 4.1]{gumbel}. Since the arguments are quite similar, we will only sketch the proof and address the main differences. Although \cite[Theorem 4.1]{gumbel} is formulated for test functions $f$ with supporting domains slightly different from (\ref{test}), the key ingredients obtained there are also effective for our choice of regime in (\ref{test}). For instance,  recall from \cite[Proposition 3.9]{gumbel} that, for any large $D>0$ and any small $\tau,\epsilon>0$ with $\tau< 2\epsilon$, the smallest singular value of $X-z$, denoted by $\lambda_1^z$, has the following precise tail asymptotics:
\begin{align}\label{tail_bound}
	\P\big(\lambda_1^z\leq E_0 \big) \lesssim n^{3/2} E_0^2 e^{-n (|z|^2-1)^2/2}+n^{-D}, \qquad E_0= n^{-3/4-\epsilon},
\end{align}
uniformly in $z$ such that $n^{-1/2} \ll |z|-1 \leq n^{-1/2+\tau}$, which covers our choice of $z$ from (\ref{test}). Moreover, the joint density bound on the smallest singular values of $X-z_i$ for different $z_i$ is also stated in \cite[Proposition 3.8]{gumbel} similarly.

For the reader's convenience, we first recall the definitions and notations used in \cite{gumbel}. Fix a large $D>0$ and set $T:=n^{D}$. By Girko's formula~\cite{Girko1984,zbMATH06420677}, for any test function $f\in C^{2}_c(\C)$, with very high probability,
\begin{align}\label{girko0}
	\LL_f:=\sum_{i=1}^n f(\sigma_i)=&-\frac{1}{4 \pi}  \int_{\C} \Delta_z f(z) \int_0^{T}  \Im \Tr G^z(\ii \eta) \dd \eta \dd^2 z +O(n^{-D}),
\end{align}
 where  $G^{z}$ is the {\it resolvent} or {\it Green function} of the Hermitized matrix~$H^{z}$:
\begin{align}\label{def_G}
	G^{z}(w):=(H^{z}-w)^{-1}, \qquad H^{z}:=\begin{pmatrix}
		0  &  X-z  \\
		X^*-\overline{z}   & 0
	\end{pmatrix},\qquad w \in \C\setminus \R.
\end{align}
 Recall from \cite[Section 3.1]{gumbel} (and the references therein) that the resolvent $G^{z}$ satisfies the following {\it local law}. There exists a small constant $c>0$ such that, with very high probability,
\begin{align}
\sup_{||z|-1|\leq c} \sup_{n^{-1} \leq \eta \leq 1} \left|\eta \Tr \big( G^z(\ii \eta)-M^z(\ii \eta) \big)  \right| \leq n^{\xi}, 
\end{align}
for any small $\xi>0$ and sufficiently large $n$, with a deterministic matrix $M^{z}\in \C^{2n \times 2n}$ given by
\begin{align}\label{Mmatrix}
	M^{z}(w):=\begin{pmatrix}
		m^{z}(w)  &  -zu^z(w)  \\
		-\overline{z}u^z(w)   & m^{z}(w)
	\end{pmatrix} \quad \mbox{with} \quad  u^z(w):=\frac{{m}^z(w)}{w+ {m}^z(w)},
\end{align}
where ${m}^z(w)$ is the unique solution to the scalar cubic equation with side condition
\begin{align}
	\label{m_function}
	-\frac{1}{{m}^z(w)}=w+{m}^z(w)-\frac{|z|^2}{w+{m}^z(w)}, \qquad\quad \mathrm{Im}[m^z(w)]\mathrm{Im}w>0.
\end{align}
We remark that the local law estimates for multiple products of $G^{z_i}$ are also stated in \cite[Theorems 3.3 and 3.5]{gumbel} and will be used in our proof with the choice of $z$ from (\ref{test}).

Set $\eta_c:=n^{-L}$ for a large $L>0$ to be fixed later. Then we define the following regularized quantity to approximate $\LL_f$ in absolute mean, \ie 
\begin{align}\label{eq:325}
	\wh \LL_{f} :=-\frac{1}{4\pi}  \int \Delta_z f(z) \Big(\int_{\eta_c}^{T}  \Im \Tr \big( G^z(\ii \eta)-M^z(\ii \eta) \big) \dd \eta \Big)  q_z \dd^2 z,
\end{align} 
where $q_z$ is a regularized truncating function introduced in \cite[Eq.~(6.4)]{gumbel} to control the lower bound of the smallest singular value $\lambda_1^{z}$ of $X-z$, \ie
\begin{align}\label{eq2}
	q_z:=q\left( \int_{-E_0 }^{E_0}  \Im \Tr G^z(y+\ii \eta_0) \dd y \right), 
\end{align}
  with $E_0:= n^{-3/4-\epsilon}$ and $\eta_0:=n^{-3\zeta}E_0$, choosing small $\epsilon,\zeta>0$ satisfying $\zeta<\epsilon/100$, where $q: \R_+ \rightarrow [0,1]$ is a smooth and non-increasing cut-off function such that
\begin{equation}\label{F_function}
	q(x)=1, \quad \mbox{if} \quad 0 \leq x \leq 1/9; \qquad q(x)=0, \quad \mbox{if} \quad x \geq 2/9.
\end{equation}
Repeating the same arguments as in Step 1--6 from \cite[Section 6]{gumbel}, we obtain the analog of \cite[Proposition 6.1]{gumbel}, \ie for any test function $f=f_s^{+}$ or $f=f_s^{-}$ chosen in (\ref{test}),
\begin{align}\label{L_reduce}
  \E\big|\LL_f-\wh \LL_f\big| =O\Big(n^{-c\epsilon}  n^{-\frac{s^2-1}{2}}+n^{-D}\Big),
\end{align}
for some small constant $c>0$ and any large $D>0$.  As explained in Step 2 of \cite[Section 4]{gumbel}, we choose $\eta_L=n^{-L}$ with a sufficiently large $L>100$ depending on the parameters $a,b>0$ from (\ref{eq:hmb}), to control the event where $\lambda^{z}_1$ is super close to zero. In Step 3 of \cite[Section 4]{gumbel}, it was shown that the effect of subtracting $M^z$ in \eqref{eq:325} is negligible by direct computations using \eqref{Mmatrix}. The main input to derive (\ref{L_reduce}) is the precise tail estimate in (\ref{tail_bound}) for the smallest singular value slightly below its typical size. Compared to \cite[Proposition 6.1]{gumbel}, we gain an additional smallness $n^{-\frac{s^2-1}{2}}$~($s\geq 1$) from using (\ref{tail_bound}) in the restricted domain  $|z|>1+s\sqrt{\gamma_n/(4n)}+O(n^{-1/2})$ from (\ref{test}).

In the following, we aim to compare the expected quantity $\E[\wh \LL_f]$ with the corresponding Ginibre expectation $\E^{\mathrm{Gin}(\C)}[\wh \LL_f]$, via a continuous interpolating flow. More precisely, we consider the Ornstein--Uhlenbeck matrix flow
\begin{align}\label{flow}
	\dd H^z_t=-\frac{1}{2} (H^z_t+Z)\dd t+\frac{1}{\sqrt{n}} \dd \mathcal{B}_t, \quad Z:=\begin{pmatrix}
		0  &  zI  \\
		\overline{z}I   & 0
	\end{pmatrix}, \quad 
	\mathcal{B}_t:=\begin{pmatrix}
		0  &  B_t  \\
		B^*_t   & 0
	\end{pmatrix},
\end{align}
with initial condition $H^z_{t=0}= H^z$ in (\ref{def_G}) with the i.i.d.~matrix $X$, where $B_t$ is an $n \times n$ matrix with i.i.d.~standard complex valued Brownian motion entries. The matrix flow $H_t^z$ interpolates between the initial matrix $H^{z}$
and the same  matrix 
with $X$ replaced with an independent complex Ginibre ensemble. We then take the time derivative of the time-dependent $\E[\wh\LL_f]$ and apply the cumulant expansion formula to evaluate how the expectation evolves along the matrix flow. Repeating the same arguments as in \cite[Section 7]{gumbel} with a minor modification on the regime of $z$, we obtain an analog of \cite[Proposition 6.2]{gumbel}, \ie  for any $f=f^\pm_s$
\begin{align}\label{derivative}
\Big|\E\big[\wh \LL_f\big]-\E^{\mathrm{Gin}(\C)}\big[\wh \LL_f\big] \Big|  = O\Big(n^{-1/4+C\epsilon}  n^{-\frac{s^2-1}{2}}+n^{-D}\Big),
\end{align}
for some constant $C>0$ and any large $D>0$. The main technical tool to derive (\ref{derivative}) is the local law estimates for the products of resolvents obtained in \cite[Theorems 3.3 and 3.5]{gumbel} together with the tail asymptotics (\ref{tail_bound}) for the smallest singular value. The proof is indeed much easier than that in \cite[Section 7]{gumbel}, since we only need the expectation of $\wh\LL_f$ instead of its fluctuations. For instance, another key ingredient \cite[Proposition 3.8]{gumbel} characterizing decorrelations between the smallest singular values of $X-z_1$ and $X-z_2$, will no longer be needed for our usage, though it remains valid. Again, compared to \cite[Proposition 6.2]{gumbel}, we achieve an additional smallness $n^{-\frac{s^2-1}{2}}$~($s\geq 1$) using (\ref{tail_bound}) for $|z|>1+s\sqrt{\gamma_n/(4n)}+O(n^{-1/2})$.  Combining (\ref{derivative}) with (\ref{L_reduce}), for a sufficiently small $\epsilon>0$, we obtain that
\begin{align}\label{GFT}
	\Big|\E\big[ \LL_f\big]-\E^{\mathrm{Gin}(\C)}\big[ \LL_f\big] \Big|  = O\Big(n^{-c\epsilon}  n^{-\frac{s^2-1}{2}}+n^{-D}\Big), \qquad f=f^\pm_s.
\end{align}
This proves the comparison principle in (\ref{comparison}) and hence (\ref{spec_uni}) for the spectral radius.

 \smallskip

To prove (\ref{rtmost_uni}) for the rightmost eigenvalue, one could prove a similar comparison principle as in (\ref{comparison}) with a modified error bound $O\Big(n^{-c\epsilon}  n^{-(s^2-1)/4}\Big)$, using that $|z|>1+s\sqrt{\gamma'_n/(4n)}+O(n^{-1/2})$ with $\gamma'_n$ given by (\ref{gamma}) (see modification details in \cite{gumbel}). 

 \smallskip

Lastly, one could also extend the Ginibre estimates as in (\ref{gin_number_asy}) to i.i.d.~matrices 
 by choosing slightly different domains $\Omega^{\pm}_{s_n}=\{z\in \C: 1+\sqrt{\frac{\gamma_n}{4n}}+ \frac{s_n}{\sqrt{4n \gamma_n}}\mp\frac{1}{\sqrt{n}} \le|z| \leq 1+n^{-\frac{1}{2}+\tau}\}$ in (\ref{test}). Repeating the above arguments using (\ref{tail_bound}) for $|z|>1+\sqrt{\gamma_n/(4n)}+s_n/\sqrt{4n \gamma_n}$ with $1\ll s_n \ll \log n$, we then obtain a similar comparison principle as in (\ref{comparison}) with error $O\Big(n^{-c\epsilon}  e^{-s_n} \Big)$.

 \smallskip

We hence complete the proof of Lemma \ref{lemma_uni} in the complex case. 
\end{proof}

{\bf Acknowledgments:}  Y.X.\ was partially supported by Grant No.\ 2024YFA1013503 from the National Key R\&D Program of China. Q.Z.\ was partially supported by NSFC Grant No.\ 12571152. Both authors thank the anonymous referees for careful readings and constructive suggestions.

\bibliographystyle{plain}
\bibliography{ginibre}

	\end{document}